\newcommand{\what}{\widehat}%
\newcommand{\R}{\mathbb R}%
\newcommand{\C}{\mathbb C}%
\newcommand{\N}{\mathbb N}%
\newcommand{\hc}{\mathrm c}
\newcommand{\Ad}{\mathrm {Ad}}%
\newcommand{\g}{\mathfrak g}
\newcommand{\kk}{\mathfrak k}
\newcommand{\ag}{\mathfrak a}
\newtheorem{theorem}{Theorem}[section]
\newtheorem{lemma}[theorem]{Lemma}
\theoremstyle{definition}
\theoremstyle{definition}
\newtheorem{remark}[theorem]{Remark}
\numberwithin{equation}{subsection}
\numberwithin{theorem}{subsection}
\begin{document}
\baselineskip15pt

\author[R. P. Sarkar]{Rudra P. Sarkar}
\address[R. P. Sarkar]{Stat-Math Unit, Indian Statistical
Institute, 203 B. T. Rd., Calcutta 700108, India, email:
rudra@isical.ac.in}

\title[quasi-analytic functions]{Quasi-analytic $L^p$-functions on Riemannian symmetric spaces of noncompact type, a theorem of Chernoff}
\subjclass[2010]{Primary 43A85; Secondary 22E30}
\keywords{quasi-analytic function, Carleman's condition, Riemannian symmetric spaces.}
\begin{abstract} A result of Chernoff  gives sufficient condition for an $L^2$-function on $\R^n$ to be quasi-analytic. This is a generalization of the classical Denjoy-Carleman theorem on $\R$ and of the subsequent work on $\R^n$ by Bochner and Taylor. In this note we endeavour to obtain an exact analogue of the result of Chernoff for $L^p, p\in [1,2]$ functions on the Riemannian symmetric spaces of noncompact type. No restriction on the rank of the symmetric spaces and no condition on the symmetry of the functions is assumed.\end{abstract}
\maketitle

\setcounter{tocdepth}{1}

\section{Introduction}
The following result, of the genre of Denjoy-Carleman theorem, is proved in \cite{Cher}.
\begin{theorem}\label{chernoff}
Let $f$ be a $C^\infty$ function on $\R^n$. Assume that, for all integers  $m\ge 0$, $\Delta_{\R^n}^m f$ is in $L^2(\R^n)$, and that $\sum_{m=1}^\infty \|\Delta_{\R^n}^m f\|_2^{-1/2m}=\infty$, where $\Delta_{\R^n}$ is the Laplacian on $\R^n$. Suppose that all partial derivatives of $f$ vanish at $0$. Then $f$ is identically zero.
\end{theorem}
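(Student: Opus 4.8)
The plan is to deduce Theorem~\ref{chernoff} from the classical one-variable Denjoy--Carleman theorem, using two ingredients: a Sobolev-type estimate that converts the $L^2$-control of the iterated Laplacians into uniform pointwise control of the derivatives of $f$, and a restriction of $f$ to lines through the origin, which turns the multivariate vanishing hypothesis into a one-variable one.

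First I would bound the derivatives of $f$ pointwise. By Plancherel, $\|\Delta^m f\|_2=\||\cdot|^{2m}\widehat f\|_2$, so the hypotheses say $|\xi|^{2m}\widehat f\in L^2(\R^n)$ for every $m$. For a multi-index $\alpha$, Fourier inversion gives $\partial^\alpha f(x)=c_n\int_{\R^n}(i\xi)^\alpha\widehat f(\xi)e^{i\langle x,\xi\rangle}\,d\xi$, so that $\sup_x|\partial^\alpha f(x)|\le c_n\int_{\R^n}|\xi|^{|\alpha|}|\widehat f(\xi)|\,d\xi$. Splitting the integral at $|\xi|=1$, bounding the inner region by $\|\widehat f\|_2=\|f\|_2$ via Cauchy--Schwarz, and writing $|\xi|^{|\alpha|}=|\xi|^{|\alpha|-2m}\,|\xi|^{2m}$ on the outer region, I obtain for any $m$ with $2m>|\alpha|+n/2$ the estimate
\begin{equation*}
\sup_x|\partial^\alpha f(x)|\ \le\ C_n\bigl(\|f\|_2+\|\Delta^m f\|_2\bigr),
\end{equation*}
the finite factor coming from $\int_{|\xi|>1}|\xi|^{2(|\alpha|-2m)}\,d\xi<\infty$. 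Choosing $m=m(k)\approx k/2$ for derivatives of order $k$ yields a Denjoy--Carleman bound $\sup_x|\partial^\alpha f|\le A\,B^{|\alpha|}M_{|\alpha|}$ with $M_k\asymp\|f\|_2+\|\Delta^{m(k)}f\|_2$.

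Next I would reduce to one variable. Fix $y\in\R^n$ and set $g(t)=f(ty)$ for $t\in\R$. The chain rule gives $g^{(k)}(t)=\sum_{|\alpha|=k}\binom{k}{\alpha}y^\alpha\,\partial^\alpha f(ty)$, so the hypothesis $\partial^\alpha f(0)=0$ forces $g^{(k)}(0)=0$ for every $k$, while the multinomial theorem together with the bound above gives $\sup_t|g^{(k)}(t)|\le\|y\|_1^{\,k}M_k$. Thus $g$ lies in the one-variable Denjoy--Carleman class attached to $\{M_k\}$, the factors $\|y\|_1^{\,k}$ and $B^{k}$ being absorbed into the class constants. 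Quasi-analyticity of this class is guaranteed by $\sum_k 1/\beta_k=\infty$ with $\beta_k=\inf_{j\ge k}M_j^{1/j}$; since $\beta_k\le M_k^{1/k}$, it suffices to verify $\sum_k M_k^{-1/k}=\infty$. Here one uses $M_k^{-1/k}\asymp\|\Delta^{m(k)}f\|_2^{-1/2m(k)}$ together with the fact that each value of $m$ is hit by only a bounded number of indices $k$, so that $\sum_k M_k^{-1/k}$ diverges precisely when $\sum_m\|\Delta^m f\|_2^{-1/2m}$ does, which is the hypothesis. Granting this, the one-variable Denjoy--Carleman theorem forces $g\equiv0$, i.e.\ $f(ty)=0$ for all $t$; taking $t=1$ and letting $y$ range over $\R^n$ gives $f\equiv0$.

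The step I expect to be the main obstacle is exactly the passage from the hypothesized series in the norms $\|\Delta^m f\|_2$ to Carleman's divergence criterion for the majorant sequence $\{M_k\}$. This requires tracking the index shift between the order $2m$ of the Laplacian and the order $k$ of the derivative (losing only a constant factor per term and only a bounded multiplicity in $k$ versus $m$), and handling the additive constant $\|f\|_2$ and the edge cases where $\|\Delta^m f\|_2$ fails to grow; in all cases the divergence persists. By contrast, the Plancherel identity, the Sobolev-type pointwise estimate, and the restriction to lines are routine, and the inequality $\beta_k\le M_k^{1/k}$ lets one invoke quasi-analyticity without the sharp form of Carleman's equivalence.
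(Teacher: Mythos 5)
Your proposal cannot be checked against an internal argument, because the paper never proves Theorem \ref{chernoff}: it is quoted from Chernoff \cite{Cher} as the motivating result, and the paper's own work goes into the symmetric-space analogue, Theorem \ref{main-result}. Measured against the method the paper uses there, your route is genuinely different. The paper converts the vanishing hypothesis, via Fourier inversion, into the orthogonality relation $\int P(\lambda)\,\what{h}(\lambda)\,|\hc(\lambda)|^{-2}d\lambda=0$ for all ($W$-invariant) polynomials $P$, and converts the Carleman-type condition, via de Jeu's extended Carleman theorem (Lemma \ref{Carleman-lem}), into density of polynomials in $L^1(d\mu)$, whence $\what{h}=0$; derivatives are never estimated pointwise and no one-dimensional reduction occurs. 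You instead upgrade the $L^2$ control of $\Delta^m f$ to uniform sup-norm bounds on all $\partial^\alpha f$ by a Plancherel/Sobolev splitting, restrict to lines through the origin, and invoke the one-variable Denjoy--Carleman theorem in the $\beta_k=\inf_{j\ge k}M_j^{1/j}$ formulation (which, as you note, tolerates the factors $B^k$ and $\|y\|_1^k$ and does not require log-convexity). Your argument is elementary and self-contained modulo the classical one-variable theorem, but it is tied to $\R^n$: it uses pointwise Euclidean Fourier inversion with polynomial symbols and, crucially, the family of straight lines through the base point, neither of which has an analogue on $G/K$; the moment-problem route is the one that generalizes, which is exactly why the paper is organized around it.

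The one step you must tighten is the divergence transfer, which you yourself flag as the main obstacle. The two-sided claim $M_k^{-1/k}\asymp\|\Delta^{m(k)}f\|_2^{-1/2m(k)}$ is not justified: writing $b_m=\|\Delta^m f\|_2$ and $2m(k)=k+2c_0$ for the bounded index shift, the two sides (ignoring the harmless additive $\|f\|_2$) differ by the factor $b_{m(k)}^{-2c_0/(k(k+2c_0))}$, which is not bounded below when $b_m$ grows faster than $e^{Cm^2}$; likewise no ``diverges precisely when'' is available. What is true, and is all you need, is the one-sided implication: if $\sum_m b_m^{-1/2m}=\infty$ then $\sum_k M_k^{-1/k}=\infty$. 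This is precisely the paper's Lemma \ref{BPR1-3.3}, quoted from \cite{BPR1}: if a positive series $\sum_n a_n$ diverges, then so does $\sum_n a_n^{1+c/n}$ for any fixed $c$. Concretely, in your setting: if $b_{m(k)}\le\max(1,\|f\|_2)$ for infinitely many $k$, then $M_k^{-1/k}$ is bounded below along those $k$ and the sum diverges trivially; otherwise eventually $M_k\le 2Cb_{m(k)}$ and $a_m:=b_m^{-1/2m}<1$, so $M_k^{-1/k}\ge (2C)^{-1}a_{m(k)}^{1+2c_0/k}$. The indices with $a_{m(k)}<2^{-k/(2c_0)}$ contribute a convergent amount to $\sum_k a_{m(k)}$, and this last series diverges because $k\mapsto m(k)$ hits every large integer with multiplicity at most two; on the remaining indices $a_{m(k)}^{2c_0/k}\ge\tfrac12$, so the divergence persists. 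With that lemma inserted in place of the asserted equivalence, your proof is complete and correct.
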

We call a set $\mathcal S$ of $C^\infty$-functions on $\R^n$ a {\em quasi-analytic} class if whenever a function $f\in \mathcal S$ and all its derivatives vanish at a point $p\in \R^n$ then $f\equiv 0$ (\cite{KP}). In this terminology  the result above gives a sufficient condition for an  $L^2$-function on $\R^n$ to be quasi-analytic. A brief description of the background of this result and an exposition on quasi-analyticity  is given at the end of this section.
Very recently following two theorems (\cite{BPR1}, \cite{BPR2}) endeavored to generalize Theorem \ref{chernoff} for Riemannian symmetric spaces of noncompact type. A prototypical  example of such spaces is the real hyperbolic space $\mathbb H^n$. We recall that a  Riemannian symmetric space of noncompact type can be realized as a quotient space $G/K$, where $G$ is a connected noncompact semisimple Lie group with finite centre and $K$ is a maximal compact subgroup of $G$.  The group $G$ and its subgroup $K$ acts naturally (from left) on $G/K$.  Let  $\Delta$ be the Laplace-Beltrami operator on $G/K$ and $\mathbf{D}(G/K)$ be the set of left $G$-invariant differential operators on $G/K$.
\begin{theorem}[\cite{BPR1}]\label{BPR1}
Let $f\in C^\infty(G/K)$ be such that $\Delta^mf\in L^2(G/K)$, for all $m\in \N\cup \{0\}$ and
\[\sum_{m=1}^\infty \|\Delta^m f\|_2^{-\frac 1{2m}}=\infty.\]
If $f$ vanishes on any nonempty open set in $G/K$ then $f$ is identically zero.
\end{theorem}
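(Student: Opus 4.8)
The plan is to recast the hypotheses in terms of the nonnegative self-adjoint operator $-\Delta$ on $L^2(G/K)$, set $A=\sqrt{-\Delta}$, and transport the Denjoy--Carleman theorem to the manifold through the wave propagator $\cos(tA)$. Since $A^{2m}=(-\Delta)^m$, one has $\|A^{2m}f\|_2=\|\Delta^m f\|_2$, and an elementary spectral (Cauchy--Schwarz) estimate shows that $A^kf\in L^2$ for every $k$ and that $k\mapsto\|A^kf\|_2$ is log-convex. In particular the subseries over even $k$ of $\sum_k\|A^kf\|_2^{-1/k}$ is exactly the series in the hypothesis, so $\sum_k\|A^kf\|_2^{-1/k}=\infty$ and $f$ is a \emph{quasi-analytic vector} for $A$. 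For a fixed $g\in C_c^\infty(G/K)$ I would then study the scalar function $\psi_g(t)=\langle \cos(tA)f,\,g\rangle$, $t\in\R$. Because $f$ lies in the domain of every power of $A$, $\psi_g$ is smooth, and since $\cos(tA)$ and $\sin(tA)$ are contractions one gets the \emph{uniform} bound $\sup_{t}|\psi_g^{(k)}(t)|\le \|A^kf\|_2\,\|g\|_2$. The log-convexity of $\{\|A^kf\|_2\}$ together with the divergence of $\sum_k\|A^kf\|_2^{-1/k}$ places $\psi_g$ in a quasi-analytic class on $\R$ in the sense of Denjoy--Carleman.

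The second step feeds the geometric hypothesis into this scalar picture via \emph{finite propagation speed}. Choose $x_0$ and $r>0$ with $\overline{B(x_0,2r)}$ contained in the open set on which $f$ vanishes, and take $g\in C_c^\infty(B(x_0,r))$. Writing $\psi_g(t)=\langle f,\cos(tA)g\rangle$ and using that $\cos(t\sqrt{-\Delta})$ propagates at unit speed, the support of $\cos(tA)g$ stays inside $\overline{B(x_0,r+|t|)}\subseteq B(x_0,2r)$ for $|t|<r$; since $f$ vanishes there, $\psi_g\equiv 0$ on $(-r,r)$. Quasi-analyticity now upgrades this to $\psi_g\equiv 0$ on all of $\R$. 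As $g$ ranges over $C_c^\infty(B(x_0,r))$ this says exactly that $\cos(tA)f=0$ on $B(x_0,r)$ \emph{for every} $t\in\R$ — a genuinely new conclusion, since for $|t|\ge r$ the wave originating from the region where $f\neq 0$ has already reached the ball.

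It remains to propagate this from the ball to all of $G/K$, and this is where I expect the real work to lie. The function $F(t,x)=(\cos(tA)f)(x)$ solves the wave equation $\partial_t^2 F=\Delta F$ and vanishes on the entire cylinder $\R\times B(x_0,r)$. Since $G/K$ is a real-analytic Riemannian manifold and the hypersurfaces $\{d(\cdot,x_0)=c\}\times\R$ are non-characteristic for $\partial_t^2-\Delta$, Holmgren's uniqueness theorem lets one continue the zero set across these surfaces and, by exhausting the connected manifold, conclude $F\equiv 0$; evaluating at $t=0$ gives $f\equiv 0$. Alternatively one may finish by Fourier analysis: the identity $\cos(tA)f=0$ on the ball for all $t$ is equivalent to $\Phi(-\Delta)f=0$ on the ball for every Schwartz function $\Phi$, and expanding $\Phi(-\Delta)f$ through the Helgason--Fourier transform (which identifies $A=\sqrt{-\Delta}$ with the Plancherel variable $\sqrt{|\lambda|^2+|\rho|^2}$) exhibits it as a continuous superposition of Poisson transforms of the boundary data, each a $\Delta$-eigenfunction and hence real-analytic; vanishing on the open ball forces these eigenfunction components to vanish identically, and injectivity of the Fourier transform then yields $f\equiv 0$.

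The main obstacle is precisely this last passage. Finite speed together with quasi-analyticity only locate the vanishing of the wave near the original set, and converting ``the spectral/wave evolution of $f$ is zero on a ball for all parameters'' into ``$f$ is globally zero'' is where the rank and the absence of $K$-symmetry bite: in the Fourier route one must control the spherical averages over the spheres $\{|\lambda|=\sigma\}$ in $\mathfrak a^*$ and the asymptotic independence of the Poisson kernels attached to distinct spectral parameters, in arbitrary rank, whereas the unique-continuation route sidesteps this by exploiting the analyticity of the metric uniformly in rank. I would therefore organize the argument around the wave equation, retaining the Fourier computation chiefly as the mechanism that diagonalizes $A$ and supplies the real-analyticity of the eigencomponents.
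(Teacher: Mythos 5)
This paper does not prove Theorem \ref{BPR1} at all: it is quoted as background from \cite{BPR1}, and the machinery actually developed here (spherical Fourier transform, moment estimates, de Jeu's extended Carleman theorem, Lemma \ref{Carleman-lem}) is deployed only for Theorem \ref{main-result}, whose vanishing hypothesis is at a single point. So your wave-equation strategy is genuinely different from anything carried out in this paper, and it is the natural strategy for an \emph{open-set} vanishing hypothesis; indeed your proof makes visible the trade-off between the two theorems: the open set supplies the interval on which $\psi_g$ vanishes, replacing the pointwise moment conditions that the paper's Fourier-side argument requires. Your first two steps are correct and essentially complete: the Cauchy--Schwarz log-convexity of $k\mapsto\|A^kf\|_2$, the divergence of the full Carleman series from its even-indexed subseries, the contraction bound $\sup_t|\psi_g^{(k)}(t)|\le\|A^kf\|_2\,\|g\|_2$, and finite propagation speed for $\cos(t\sqrt{-\Delta})$ on a complete manifold giving $\psi_g\equiv 0$ on $(-r,r)$, hence on $\R$ by Denjoy--Carleman. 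Only one sentence is missing there: to identify the classical iterates $\Delta^mf\in L^2$ with the spectral powers $A^{2m}f$ you need essential self-adjointness of $\Delta$ on $C_c^\infty(G/K)$ (true because $G/K$ is complete), so that $f$ really lies in the domain of every $A^k$.

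The endgame is where care is needed, and you flagged this yourself. Route A is sound, but the appeal must be to the \emph{global} Holmgren--John theorem (continuation of the zero set along a continuous deformation of non-characteristic surfaces), not merely local Holmgren across a timelike cylinder: local uniqueness propagates vanishing only a bounded spatial distance per unit of time, and it is exactly because your cylinder is $\R\times B(x_0,r)$ --- infinite in time, which is what quasi-analyticity bought --- that the deformation exhausts the connected manifold; since the metric of $G/K$ is real-analytic, the theorem applies. Route B, as written, has a genuine gap: from the vanishing of the \emph{superposition} $\Phi(-\Delta)f$ of Poisson transforms on $B(x_0,r)$ you cannot conclude that the individual eigenfunction components vanish --- an integral of eigenfunctions can vanish on an open set without its components doing so, and moreover $\Phi(-\Delta)f$ need not be real-analytic for a general Schwartz $\Phi$ (Schwartz decay of the multiplier gives smoothness, not analyticity). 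The standard repair is simpler than what you sketch: take the Gaussian multiplier, i.e.\ subordinate the heat semigroup to the wave group, so that $e^{s\Delta}f=0$ on $B(x_0,r)$ for every $s>0$; then $e^{s\Delta}f$ is an analytic vector for $\Delta$, hence a real-analytic function by Nelson's theorem \cite{Nelson} (the same works for the band-limited pieces $\chi_{[0,R]}(A)f$), so it vanishes identically on the connected manifold, and $s\to0$ gives $f=0$. With Route A made precise, or Route B repaired this way, your argument is a complete and correct proof.
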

The hypothesis of this theorem differs from that of Theorem \ref{chernoff} in the vanishing condition. Indeed, the condition implies in particular that {\em all derivatives} of $f$ are zero at every point of an open set. An important subclass of functions on $G/K$ are those which are invariant under left $K$-action. When $G/K=\mathbb H^n$, then these $K$-invariant functions are {\em radial}, i.e. the value of the function at a point depends only on the distance of the point from the origin of $G/K$.
In the next theorem the  vanishing condition is only at a point like Theorem \ref{chernoff}, but this is achieved at the cost of restricting to the class of $K$-invariant functions on $G/K$, denoted by $C^\infty(G//K)$.
\begin{theorem}[\cite{BPR2}]\label{BPR2}
Let $f\in C^\infty(G//K)$ and $p\in [1,2]$. Suppose  $\Delta^mf\in L^p(G/K)$, for all $m\in \N\cup \{0\}$ and
\[\sum_{m=1}^\infty \|\Delta^m f\|_p^{-\frac 1{2m}}=\infty.\]
If there exists $x_0\in G/K$, such that $Df(x_0)=0$ for all $D\in \mathbf{D}(G/K)$  then $f$ is identically zero.
\end{theorem}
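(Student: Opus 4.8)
The plan is to pass to the spherical Fourier transform and reduce the statement to a one–dimensional Denjoy–Carleman (quasi-analyticity) assertion, in the spirit of the Euclidean Theorem~\ref{chernoff}. For a $K$-invariant $f$ write $\widehat f(\lambda)=\int_{G/K}f(x)\phi_{-\lambda}(x)\,dx$ for the spherical transform, where the $\phi_\lambda$ are the elementary spherical functions parametrised by $\lambda\in\ag^*$. The two structural facts I would build on are, first, that $\Delta$ becomes a multiplier, $\widehat{\Delta^m f}(\lambda)=(-1)^m(\langle\lambda,\lambda\rangle+\langle\rho,\rho\rangle)^m\,\widehat f(\lambda)$, and more generally $\widehat{Df}(\lambda)=\gamma(D)(\lambda)\,\widehat f(\lambda)$, where $D\mapsto\gamma(D)$ is the Harish–Chandra isomorphism of $\mathbf D(G/K)$ onto the $W$-invariant polynomials on $\ag^*$; and second, that for $f\in L^p(G/K)$ with $1\le p\le 2$ the transform $\widehat f$ is controlled by $\|f\|_p$ through the spherical Hausdorff–Young inequality $\|\widehat f\|_{L^{p'}(|c(\lambda)|^{-2}d\lambda)}\le C\|f\|_p$, and in fact extends holomorphically to the tube $\{\lambda+i\eta:|\eta|<(\tfrac2p-1)|\rho|\}$ when $p<2$.

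With these in hand I would translate the two hypotheses. The summability condition becomes a statement about the growth of the moments of $\widehat f$: peeling off a fixed number $k_0$ of factors of $(\langle\lambda,\lambda\rangle+\langle\rho,\rho\rangle)$ and applying H\"older together with the polynomial growth of the Plancherel density $|c(\lambda)|^{-2}$, the Hausdorff–Young bound upgrades to $\int_{\ag^*}(\langle\lambda,\lambda\rangle+\langle\rho,\rho\rangle)^m\,|\widehat f(\lambda)|\,|c(\lambda)|^{-2}\,d\lambda\le C\,\|\Delta^{m+k_0}f\|_p$, so that the whole family is governed by the sequence $M_m=\|\Delta^m f\|_p$. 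The vanishing condition I would read off through the inversion formula: since $D\phi_\lambda=\gamma(D)(\lambda)\phi_\lambda$, the assumption $Df(x_0)=0$ for every $D\in\mathbf D(G/K)$ says precisely that the complex measure $d\mu(\lambda)=\widehat f(\lambda)\,\phi_\lambda(x_0)\,|c(\lambda)|^{-2}\,d\lambda$ annihilates every $W$-invariant polynomial, and in particular that $\int_{\ag^*}(\langle\lambda,\lambda\rangle+\langle\rho,\rho\rangle)^m\,d\mu(\lambda)$ is a constant multiple of $\Delta^m f(x_0)=0$ for all $m$.

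The heart of the argument is then a quasi-analyticity step. Forming an auxiliary one–variable function — the Euclidean Fourier transform of a push-forward of $\mu$ under a suitable coordinate on $\ag^*$ — its derivatives at the origin are the moments just shown to vanish, while the estimate above places it in a Carleman class $C\{N\}$ with $N_{2m}\sim M_m$; since $\sum_n N_n^{-1/n}\ge\sum_m N_{2m}^{-1/2m}=\sum_m M_m^{-1/2m}=\infty$, the Denjoy–Carleman theorem (equivalently Theorem~\ref{chernoff} applied on $\ag\cong\R^{\ell}$ after the Abel transform, which intertwines $\Delta$ with $\Delta_{\ag}-\langle\rho,\rho\rangle$) forces this function, hence $\mu$, hence $\widehat f$, to vanish, and injectivity of the transform yields $f\equiv0$. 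The delicate point — and what I expect to be the main obstacle — is the exact bookkeeping of the Carleman exponent: because $\Delta$ has order two, a naive push-forward under $\langle\lambda,\lambda\rangle$ produces a sequence requiring $\sum_m M_m^{-1/m}=\infty$, strictly stronger than the hypothesis, so the reduction must be arranged along a coordinate of degree one, and the holomorphic extension of $\widehat f$ to the tube of width $(\tfrac2p-1)|\rho|$, together with the log-convexity of $m\mapsto\|\Delta^m f\|_p$, must be used both to absorb the index shift $k_0$ and to convert the $W$-invariant (hence only partial) vanishing at the single point $x_0$ into genuine infinite-order vanishing of the one–variable function. This is exactly where the range $p\in[1,2]$ and the analytic, not identically vanishing, factor $\phi_\lambda(x_0)$ enter decisively; the case $p=2$, where the tube degenerates to $\ag^*$ and one works directly with Plancherel, should be treated as a limiting variant of the same scheme.
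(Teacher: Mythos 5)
Your scaffolding coincides with the paper's machinery up to the decisive step: the spherical transform, the Hausdorff--Young bound, the moment estimate $\int_{\ag_0^\ast}(|\lambda|^2+|\rho|^2)^m|\what{f}(\lambda)|\,|\hc(\lambda)|^{-2}d\lambda\le C\|\Delta^{m+\alpha}f\|_p$, and the reading of $Df(x_0)=0$ through the inversion formula as the statement that $d\mu(\lambda)=\what{f}(\lambda)\varphi_\lambda(x_0)|\hc(\lambda)|^{-2}d\lambda$ annihilates every $W$-invariant polynomial are all present in the paper (which prefers to translate $x_0$ to the origin and reinstate the factor $\varphi_\lambda(g)$ only at the end, via Remark \ref{reduce-to-biinvariant}). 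The genuine gap is exactly the step you yourself flag as ``the main obstacle'' and then leave unresolved: your one-variable function, the Fourier transform of the push-forward of $\mu$ along a coordinate $\lambda_j$, has derivatives at $0$ equal to the coordinate moments $\int\lambda_j^k\,d\mu$, and these are \emph{not} among the integrals you have shown to vanish --- only integrals against $W$-invariant polynomials vanish, and $\lambda_j^k$ is not $W$-invariant (outside rank one with $k$ even). The tools you propose to bridge this --- holomorphic extension of $\what{f}$ to a tube and log-convexity of $m\mapsto\|\Delta^mf\|_p$ --- cannot do the job: the tube degenerates precisely when $p=2$ (a case your scheme must cover), holomorphy of $\what{f}$ in $\lambda$ says nothing about which moments of $\mu$ vanish, and log-convexity of the $L^p$-norms for $p\neq2$ is itself unproven; the paper handles the index shift $\alpha$ by the elementary series Lemma \ref{BPR1-3.3} instead. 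As written, therefore, the heart of the proof is missing.

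The missing idea is a one-line symmetrization, not complex analysis: the measure $\mu$ is itself $W$-invariant, because $\what{f}$, $\lambda\mapsto\varphi_\lambda(x_0)$ and the Plancherel density are all $W$-invariant; hence for an arbitrary polynomial $Q$ one has $\int Q\,d\mu=\int Q^W\,d\mu=0$, where $Q^W=|W|^{-1}\sum_{w\in W}Q\circ w$ is $W$-invariant. So $\mu$ annihilates \emph{all} polynomials, all coordinate moments vanish, and your one-dimensional Denjoy--Carleman argument then closes (odd-order moment bounds follow from even ones by Cauchy--Schwarz). With that repair your route is viable, but note it is genuinely different from the paper's at this point: the paper does not push forward to one variable, but invokes de Jeu's extension of Carleman's theorem (Lemma \ref{Carleman-lem}) to get density of polynomials in $L^1(d\mu)$ with $d\mu=|\what{h}(\lambda)|\,|\hc(\lambda)|^{-2}d\lambda$, approximates $\overline{\what{h}}$ by $W$-averaged polynomials, and concludes $\int_{\ag_0^\ast}|\what{h}|^2|\hc(\lambda)|^{-2}d\lambda<\epsilon$ by duality with \eqref{where-0}; your repaired argument essentially reproves the special case of de Jeu's lemma that the paper quotes.
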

Purpose of this note to prove the following analogue of Theorem  \ref{chernoff} for $G/K$, where $\Delta$ and $\mathbf{D}(G/K)$ are as defined above and $d_r(u)$ is the right $G$-invariant differential operator defined by elements $u$ of the universal enveloping algebra $U(\g)$ of $G$. (See Section 2 for details.)

\begin{theorem}
\label{main-result} Let $p\in [1, 2]$ be fixed. Suppose that a function $f\in C^\infty(G/K)$ satisfies the following conditions:

(i) $d_r(u)f\in L^p(G/K)$ for all  $u\in U(\mathfrak g)$,

(ii) $\sum_{n\in \N} \|\Delta^n f\|_p^{-1/2n}=\infty$ and

(iii) for a fixed point $x_0\in G/K$,  $d_r(u)f(x_0)=0$ for all $u\in U(\mathfrak g)$.

Then $f=0$.
\end{theorem}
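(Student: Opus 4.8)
My plan is to reduce the statement to the classical one-dimensional Denjoy–Carleman theorem applied to a single real-variable function built from $f$ by the cosine wave propagator, and then to recover $f$ itself from the resulting spectral information via the cyclicity of the spherical vector in the principal series. First I would normalize: since $\Delta$ and each norm $\|\cdot\|_p$ are $G$-invariant and $\mathrm{Ad}(g)$ is an automorphism of $U(\g)$, replacing $f$ by a left translate lets me assume $x_0=o=eK$, with conditions (i)--(iii) preserved. Because the Casimir element $\Omega$ is central, $d_r(\Omega)$ is $G$-invariant and coincides with $\Delta$ up to a constant factor; hence for each $n$ and $u$ the function $\Delta^n d_r(u)f$ is a finite combination of terms $d_r(\Omega^j u)f$. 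Thus condition (i) already guarantees $\Delta^n d_r(u)f\in L^p$ for all $n,u$, and condition (iii) gives $(\Delta^n d_r(u)f)(o)=0$ for all $n,u$.

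For each $u\in U(\g)$ I set $v_u(s)=\big(\cos(s\sqrt{-\Delta})\,d_r(u)f\big)(o)$, $s\in\R$, which is legitimate since $-\Delta\ge|\rho|^2>0$. Differentiating, $v_u^{(2n)}(s)=(-1)^n\big(\cos(s\sqrt{-\Delta})\Delta^n d_r(u)f\big)(o)$ while the odd-order derivatives carry a $\sin$, so by the previous paragraph $v_u^{(k)}(0)=0$ for every $k$. The heart of the argument is then to bound $|v_u^{(k)}(s)|$ by a Denjoy–Carleman sequence. I would turn evaluation at $o$ into $L^p$-norms by a local Sobolev estimate $|h(o)|\le C\sum_{|\alpha|\le N}\|d_r(u_\alpha)h\|_p$; the operators $d_r(u_\alpha)$ commute with $\Delta$, hence with the propagator, and a fixed-time $L^p$ bound for the wave propagator with a finite loss of derivatives, $\|\cos(s\sqrt{-\Delta})h\|_p\le C_s\|(1-\Delta)^{\sigma/2}h\|_p$, then dominates $|v_u^{(2n)}(s)|$ by $C_{u,s}\max_j\|\Delta^{n+c_j}d_r(w_j)f\|_p$ for finitely many fixed $w_j$ and bounded $c_j$, with an analogous bound for odd $k$. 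The classical Denjoy–Carleman theorem would then force $v_u\equiv0$, provided $\sum_n\big(\max_j\|\Delta^{n+c_j}d_r(w_j)f\|_p\big)^{-1/2n}=\infty$.

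The hard part will be precisely this last divergence, i.e. showing that the Carleman hypothesis (ii) on $\|\Delta^n f\|_p$ is \emph{stable} under (a) shifting the exponent of $\Delta$ by a bounded constant and (b) inserting a fixed operator $d_r(w)$. Step (a) rests on the log-convexity of $n\mapsto\|\Delta^n f\|_p$, and step (b) on an elliptic domination $\|d_r(w)h\|_p\le C\sum_{2j\le\deg w}\|\Delta^j h\|_p$; both are routine for $p=2$, but for $p\in[1,2)$ they rely on the $L^p$-functional calculus for $\Delta$ on $G/K$, which is available because $\mathrm{spec}(\Delta)\subset(-\infty,-|\rho|^2]$ is bounded away from $0$. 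Granting a lemma that a log-convex sequence obeying Carleman's condition continues to obey it after such a bounded reindexing and multiplication by controlled factors, hypothesis (ii) transfers to the required sequence and yields $v_u\equiv0$ for every $u\in U(\g)$.

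Finally I would convert $v_u\equiv0$ into $f=0$. Integrating $v_u(s)=0$ against even Schwartz functions realizes $\big(\psi(\sqrt{-\Delta})d_r(u)f\big)(o)=0$ for a dense family of $\psi$, and computing the left-hand side through the Helgason Fourier transform (using $A(o,b)=0$, so the kernel is $1$ at $o$) gives $\int_{K/M}\widetilde{d_r(u)f}(\lambda,b)\,db=0$ for a.e.\ $\lambda$ and every $u$. Since the Helgason transform intertwines the left $G$-action generated by the $d_r(u)$ with the principal series $\pi_\lambda$ acting on $L^2(K/M)$, this says that $\widetilde f(\lambda,\cdot)$ is orthogonal to $\{\pi_\lambda(u)\mathbf 1:u\in U(\g)\}$; cyclicity of the spherical vector $\mathbf 1$ for $\pi_\lambda$ (valid for a.e.\ $\lambda$, where $\pi_\lambda$ is irreducible) then forces $\widetilde f(\lambda,\cdot)=0$, whence $f=0$. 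This last step is exactly where right-invariant operators for \emph{all} $u\in U(\g)$, rather than only the invariant operators of the radial theory of Theorem~\ref{BPR2}, are indispensable, since they alone recover the full $K/M$-dependence of $\widetilde f$ and not merely its spherical part.
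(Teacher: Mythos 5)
Your endgame---the reduction to $x_0=o$ by left translation and the use of cyclicity of the spherical vector $e_0$ in the (a.e.\ irreducible) principal series to pass from spectral vanishing to $f=0$---coincides with the paper's Step~0 and Lemma~\ref{lem-1}, and that part is sound. The middle of your argument, however, has two genuine gaps.

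First, collapsing everything to one real variable via $v_u(s)=\bigl(\cos(s\sqrt{-\Delta})\,d_r(u)f\bigr)(o)$ loses the higher-rank structure irrecoverably. Since $\int_{K/M}\widetilde{d_r(u)f}(\lambda,b)\,db=\what{h_u}(\lambda)$ with $h_u=(d_r(u)f)_0$, the totality of what $v_u\equiv 0$ yields is $\int_{\ag_0^\ast}\psi\bigl(\sqrt{|\lambda|^2+|\rho|^2}\bigr)\,\what{h_u}(\lambda)\,|\hc(\lambda)|^{-2}\,d\lambda=0$ for even Schwartz $\psi$, i.e.\ the vanishing of the means of $\what{h_u}\,|\hc|^{-2}$ over the spheres $\{|\lambda|=r\}$. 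This is the paper's \eqref{where-0} restricted to \emph{radial} polynomials $P(\lambda)=p(|\lambda|^2)$. In rank one, $W$-invariant equals even equals radial, so this would suffice; but in rank $l\ge 2$ the function $\what{h_u}$ is only $W$-invariant, and a nonzero $W$-invariant function can have identically vanishing spherical means (think of $\lambda_1^2-\lambda_2^2$ times a radial factor when $W$ acts by sign changes). So the pointwise conclusion $\what{h_u}(\lambda)=0$ for a.e.\ $\lambda$, which your cyclicity step requires, does not follow from your argument. This is exactly why the paper feeds the full algebra $\mathbf{D}(G/K)\cong U(\ag)^W$ (all $W$-invariant polynomials, not just polynomials in $|\lambda|^2$) into de Jeu's \emph{multidimensional} extension of Carleman's theorem (Lemma~\ref{Carleman-lem}), whose conclusion---density of all polynomials in $L^1(\R^l,d\mu)$---does separate points on spheres; the one-dimensional Denjoy--Carleman theorem structurally cannot.

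Second, the Carleman transfer underlying your Denjoy--Carleman application rests on a false inequality. You need $\sum_n M_{2n}^{-1/2n}=\infty$ for $M_{2n}\approx\max_j\|\Delta^{n+c_j}d_r(w_j)f\|_p$ and propose to deduce it from (ii) via the ``elliptic domination'' $\|d_r(w)h\|_p\le C\sum_{2j\le\deg w}\|\Delta^j h\|_p$. This fails already for $p=2$ and $w=X\in\kk_0$: $d_r(X)$ generates left translations, i.e.\ it is a Killing field (an angular derivative about $o$) whose coefficients grow with distance from $o$; a function supported at distance $R$ from $o$ oscillating at angular frequency $\sim R$ satisfies $\|\Delta^j h\|_2\lesssim_j\|h\|_2$ uniformly in $R$ while $\|d_r(X)h\|_2\gtrsim R\,\|h\|_2$. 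Hypothesis (i) does give finiteness of $\|\Delta^n d_r(u)f\|_p$ (by centrality of the Casimir), but no quantitative control in terms of $\|\Delta^m f\|_p$, and (ii) speaks only of the latter; hence the divergence cannot be transferred to your sequence. The paper's device for precisely this difficulty is the identity \eqref{derivative-phi}, $d_r(u^\ast)\varphi_\lambda=\sum_i P_i(\lambda)\varphi_\lambda^i$ (from \cite[Proposition 4.6.2]{GV}): the derivative moves to the spectral side as \emph{fixed} polynomial factors, which H\"older, the Plancherel estimate \eqref{plancherel-estimate} and Hausdorff--Young \eqref{HY-general} absorb into a fixed shift, producing the moment bounds \eqref{eq-1} in terms of $\|\Delta^{m+\alpha}f\|_p$ alone. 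Your remaining assumed inputs---fixed-time $L^p$ bounds with finite loss of derivatives for $\cos(s\sqrt{-\Delta})$ on arbitrary-rank $G/K$ (including $p=1$), and log-convexity of $n\mapsto\|\Delta^n f\|_p$ for $p<2$---are also unproven, but they are secondary: once derivatives are estimated spectrally against the Plancherel measure, where $|\cos(s\sqrt{|\lambda|^2+|\rho|^2})|\le 1$, no propagator bounds are needed at all.
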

It follows from the Taylor's theorem that a nonzero  real analytic function on $G/K$  cannot satisfy condition (iii) above. Thus the Theorem \ref{main-result} asserts that a function $f$ on $G/K$ which satisfies condition (i) and (ii) of the hypothesis is {\em quasi-analytic} in the sense that like a nonzero real analytic function, this function $f$ also cannot  satisfy (iii).

In the hypothesis of the theorem above,  precisely those differential operators of $G$ are used, which in particular preserves the right $K$-invariance of a function $f$ on $G$, and hence are relevant for functions on $G/K$. It is worth pointing out that $\{d_r(u) \mid u\in U(\g)\}$ includes $\mathbf{D}(G/K)$. On the other hand the theorem is not true if we restrict only to the differential operators in $\mathbf{D}(G/K)$. A counter example to establish this is  given in \cite{BPR1}.
Thus we may consider Theorem \ref{main-result} an exact analogue of Theorem \ref{chernoff} for $G/K$. (See Section 4 for a discussion on this.) If we assume that $f$ is $K$-biinvariant then this theorem reduces to Theorem \ref{BPR2}, as expected (see Remark \ref{reduce-to-biinvariant}).
For convenience we shall call conditions (ii) and (iii) in the hypothesis respectively as {\em Carleman-type condition} and the {\em vanishing condition}. Section 2 contains all preliminaries required for this paper. Theorem \ref{main-result} is proved in Section 3.
\subsection{Background} If $f$ is a real analytic function on an interval $(a, b)\subseteq \R$, then $f$ and all of its derivatives cannot vanish at a point $x_0\in (a, b)$. On the other hand, by definition, as the Taylor series of an analytic function has to converges to the function, the formula for the remainder term shows that the derivatives of $f$ cannot grow too fast. These observations intrigue one to find an appropriate growth condition on $C^\infty$-functions to define a space which accommodates functions which are not necessarily analytic, yet their nonzero members cannot vanish at a point along with all of its derivatives. For this last property which they share with real analytic functions, members of this space are called the {\em quasi-analytic} functions. The seminal result of Denjoy and Carleman (\cite{Den-21, Carleman}) provides the precise formulation of this on $\R$. See \cite{Rudin, Cohen} for  a simple proof and exposition. Subsequently Bochner and Taylor \cite{B-T-1939} extended this result to $\R^n$ and other spaces, in which various special differential operators were constructed to replace $d/dx$ used in the Denjoy-Carleman's theorem. In all these results $L^\infty$-norms of the derivatives were considered to restrict the growth of the sequence of derivatives.
Some important developments in the intervening period are the study of analytic vectors of elliptic operators on Lie groups by  Nelson \cite{Nelson} and subsequent work on quasi-analytic vectors by Nussbaum \cite{Nuss1965, Nuss1969} in the set up of operator theory on Hilbert spaces, which was furthered by the work of de Jeu \cite{de-Jeu}. Our final point of departure from these classical results is the result of Chernoff (Theorem \ref{chernoff}), who obtained this as a part of his study of the relation between these operator theoretic developments on quasi-analytic vectors and quasi-analytic functions. Theorem  \ref{chernoff} differs from the result of Bochner and Taylor in a number of ways. Most importantly, Carleman-type condition  involves only (powers of) Laplacian and the $L^2$-norm replaces  $L^\infty$-norm. Theorem \ref{BPR1} and \ref{BPR2} are the first two attempts to obtain versions of this result  for Riemannian symmetric spaces. A recent paper \cite{GMT} obtains a different version for $L^2$-functions on rank one symmetric spaces, where the vanishing condition is taken on every $k\in K$. Our aim  here is to offer a generalization of Theorem \ref{chernoff} to the symmetric spaces of noncompact type of any rank and without any restriction of $K$-invariance on the functions.

\section{Preliminaries}
The prerequisites of semisimple Lie groups and Riemannian symmetric spaces  are standard and widely available, e.g. in \cite{GV, Wallach, Helga-2, Helga-3}. To make this note self contained we shall gather them here without elaboration, and in the process will establish the notation, most of which are also standard.

A Riemannian symmetric space of noncompact type can be realized as a quotient space $G/K$, where  $G$ is a connected  noncompact semisimple Lie group with finite centre and $K$ is a maximal compact subgroup of $G$. The Group $G$ and hence  $K$ acts naturally on $G/K$. Let us denote the origin $\{K\}$ of $G/K$ by $o$.

A function $f$ on $G$ is called right (respectively left) $K$-invariant if $f(gk)=f(g)$ (respectively $f(kg)=f(g)$) for all $g\in G$ and $k\in K$. If $f$  is both left and right $K$-invariant then it is called a $K$-biinvariant function.   We shall frequently identify right $K$-invariant functions on $G$ with functions on $G/K$. Through this identification  a left $K$-invariant function on $G/K$ is a $K$-biinvariant function on $G$. The subset of the left $K$-invariant functions in a function space $\mathcal F(G/K)$ will be denoted by $\mathcal F(G//K)$.

Let $\mathfrak g_0$ and $\kk_0$ be the Lie algebras of $G$ and $K$ respectively and $\mathfrak p$ be the orthogonal complement of $\kk_0$ in $\g_0$ with respect to the Cartan-Killing form of $\g_0$. Let $\ag_0$ be a maximal abelian subspace of $\mathfrak p$, whose dimension (which is independent of its choice) is the real rank of $G$ and the rank of the symmetric space $G/K$.
We shall assume that the rank of $G/K$ is $l$ and  $\dim G/K$ is $n$. Let $\Sigma$ be the restricted root system of the pair $(\g_0, \ag_0)$, and $W$ the associated Weyl group. For a root $\alpha\in \Sigma$, let $\g_\alpha$ be the associated root space and let $\dim \g_\alpha$ be $m_\alpha$, which is called the multiplicity of the root $\alpha$.
Let $(\ag_0)_+\subset \ag_0$ be a positive Weyl chamber  and $\Sigma^+$ the corresponding set of positive roots. Let $\mathfrak n_0=\sum_{\alpha\in \Sigma^+} \g_\alpha$. Then  $N=\exp \mathfrak n_0$ is a simply connected nilpotent Lie group and  $A=\exp \mathfrak a_0$ is an abelian group. Let $M$ be the centralizer of $A$ in $K$. We have the analytic diffeomorphism $(k, a, n)\mapsto kan$ from $K\times A\times N$ to $G$. The corresponding decomposition of $G$ is called the Iwasawa decomposition: $G=K \exp \ag_0 N$. Using this decomposition an element $g\in G$ can be uniquely written as $g=k \exp(H(g)) n$ where $k\in K, n\in N$ and $H(g)\in \ag_0$. The group $G$ also has the Cartan decomposition $G=K (\exp \overline{(\ag_0)_+}) K$.
Let $\ag_0^\ast$ be the (real) dual space of $\ag_0$ and let $\mathfrak a^\ast$ be its complexification. The half-sum of the positive roots counted with multiplicities is denoted by $\rho$ and is given by $\rho= \frac12 \sum_{\alpha\in \Sigma^+} m_\alpha \alpha \in \ag_0^\ast$.

The Killing form on $\g_0$ restricts to a positive definite form on $\ag_0$, which  induces a positive inner product on $\ag_0$ and hence on $\ag_0^\ast$. Equipped with these inner products we identify them with $\R^l$. The norm corresponding to this inner product  on $\ag_0^\ast$ is denoted by $|\cdot|$.
The Killing form endows $G/K$ with both a natural $G$-invariant Riemannian metric, hence a Laplace-Beltrami operator $\Delta$ and a corresponding $G$-invariant measure (denoted by $dx$).
In the Cartan decomposition $x=k_1 \exp Hk_2$, the Haar measure $dx$ of $G$ is given by
\begin{equation}\label{Haar} dx=  c\, \delta(H)\, dk_1 dH dk_2\end{equation} for a constant $c$ and
\[\delta(H)=\Pi_{\alpha \in \Sigma^+} (\sinh \alpha(H))^{m_\alpha}\sim O(e^{2\rho(H)}),\] where $dH$ is the Lebesgue measure on  $\overline{(\ag_0)_+}$ and $dk$ is the normalized Haar measure on $K$.

Let $\mathfrak g, \ag, \kk$ be the  complexifications of $\g_0, \ag_0, \kk_0$ respectively. Then $\mathfrak g, \ag, \kk$  are  Lie algebras over $\C$.  Let $U(\g)$ and $U(\ag)$ be the universal enveloping algebras of $G$  and $A$ respectively.
Let $\Ad$ denote the adjoint representation of $G$ on its Lie algebra $\mathfrak g_0$. Through its restriction to $K$, the  group $K$ also acts on $\g_0$ by adjoint action, hence on $\g$, and  extends to a representation of $U(\g)$. Let \[U(\g)^K=\{u \in U(\g)\mid \Ad(k) u = u \text{ for all } k\in K\}.\]
Every element of $\g_0$, hence of $U(\g)$ defines a left-invariant and a right-invariant differential operator denoted by $d_l$ and $d_r$ respectively which are defined as follows. For a function $f\in C^\infty(G)$ and $u=X_1X_2\ldots X_r, \, X_i\in \g_0$  and $x\in G$, we define (\cite[p. 84]{GV})
\[(i) \ \ \ \ d_l(u) = f(x;u)=(\partial^r/\partial t_1 \partial t_2\ldots \partial t_r)_0 f(x \exp t_1X_1\cdots \exp t_rX_r), \]
\[(ii) \ \ \ \ \ d_r(u)= f(u; x)=(\partial^r/\partial t_1 \partial t_2\ldots \partial t_r)_0 f(\exp t_1X_1\cdots \exp t_rX_r x),\]  (the suffix $0$ in the right hand sides denote that the derivatives are taken at $t_1=\cdots=t_r=0$) and then extend this definition  to all $u\in U(\g)$.
The left-invariant  differential operators $d_l(u)$ defined by the elements of $U(\g)^K$, naturally descends to left-invariant differential operators on $G/K$. Let $\kk U(\g)$ denotes the ideal generated by $\kk$ in $U(\g)$. If $u\in \kk  U(\g)\cap U(\g)^K$, then $d_l(u)$ acts trivially on $C^\infty(G/K)$, hence the set of left invariant differential operators on $G/K$, denoted by $\mathbf{D}(G/K)$ can be identified with $U(\g)^K/\kk U(\g)\cap U(\g)^K$. We recall that $\mathbf{D}(G/K)$ is a commutative algebra which contains the Laplace-Beltrami operator $\Delta$ and if the rank of $G/K$ is one, then $\mathbf{D}(G/K)$ is generated by  $\Delta$. For an element $u\in U(\g)^K$, the projection of $d_l(u)$  on $\mathbf{D}(G/K)$ will be  denoted by $D(u)$.
For $f\in C^\infty(G/K)$ and $u\in U(\g)^K$, we have $d_r(u)f=d_l(u)f=D(u)f$ (\cite[p. 52-53]{GV}). It is easy to verify that for any $f\in C^\infty(G/K)$ and $u\in U(\g)^K$,
 $(d_r(u)f)_0= d_r(u) f_0$, where $f_0$ denotes the $K$-biinvariant component of  $f$, precisely $f_0(x)=\int_K f(kx) dk$.

 Since $\exp \mathrm{Ad}(g) X = g\exp X g^{-1}$ we have $f(g\exp tX)=f(\exp t(Ad(g)X) g)$ and $f(\exp tXg)=f(g\exp t(Ad(g^{-1})X))$, for any $f\in C^\infty(G)$, $X\in \g$ and $g\in G$. This defines a transition between right and left invariant derivatives: $d_r(u)f(g)=d_l(\mathrm{Ad}(g^{-1})u f(g)$. See e.g. \cite[p. 84]{GV} for details.


Since $\ag$ is abelian, $U(\ag)$ is viewed as the set of complex polynomials on $\ag_0^\ast$.
The Weyl group $W$ acts on $\ag_0^\ast$, hence on the polynomial algebra $U(\ag)$. The set of $W$-fixed polynomials in $U(\ag)$ is denoted by $U(\ag)^W$.
There is a surjective isomorphism $\Gamma: \mathbf D(G/K) \to U(\ag)^W$. For $\lambda\in \ag^\ast, k\in K$, the function $e_{\lambda, k}(x)=e^{(i\lambda+\rho) H(x^{-1}k)}$ on $G/K$ is a joint eigenfunction of elements of $\mathbf D(G/K)$: $De_{\lambda, k}=\Gamma(D)(i\lambda) e_{\lambda, k}$ and  in particular $\Gamma(\Delta)(i\lambda)=-(|\lambda|^2+|\rho|^2)$.

Let $(\pi, H)$ be a unitary representation of $G$. A vector $v\in H$ is said to be a $C^\infty$-vector for $\pi$ if $g\mapsto \pi(g)v$ from $G$ to $H$ is a  $C^\infty$-function. The set of  $C^\infty$-vectors in $H$, denoted by $H_\infty$ is dense in $H$.
For $v\in H_\infty$ and  $X\in \g_0$ we define
\[\pi(X) v= \frac{d}{dt} \pi(\exp tX) v|_{t=0}.\]
For $X_1, X_2\in \g_0$, $X=X_1+iX_2\in \g$ and $v\in H_\infty$, we set $\pi(X) v = \pi(X_1)v+i\pi(X_2)v$. Then $\pi(X) H_\infty\subseteq H_\infty$ and $\pi([X_1, X_2])=[\pi(X_1), \pi(X_2)]$. That is $(\pi, H_\infty)$ is a representation of $\g$.
For $X_1, X_2$ and $X$ as above, we define $\overline{X}=X_1-iX_2$.
 For an element $u\in U(\g)$, its adjoint $u^\ast\in U(\g)$ is defined through the following rules:
  \[1^\ast = 1, X^\ast=-\overline{X} \text{ for }X\in \g \text{ and for }u_1, u_2\in U(\g), (u_1u_2)^\ast=u_2^\ast u_1^\ast.\] Then $\langle \pi(u) v, w\rangle=\langle  v, \pi(u^\ast) w\rangle$ for $v, w\in H_\infty$.

 A vector $v\in H_\infty$ is called $K$-finite for the representation $\pi$ if $\{\pi(k)v \mid k\in K\}$ spans a finite dimensional vector subspace of $H_\infty$. The set of all $K$-finite vectors in $H_\infty$ is denoted by $H_F$.
For $X\in \g$, $\pi(X) H_F\subset H_F$. It is known ( \cite[5.3-5.5]{Wallach}) that if an irreducible representation $(\pi, H)$ of $G$ is unitary  then the corresponding representation $(\pi, H_F)$ is also an irreducible representation of the Lie algebra $\g_0$.

Henceforth, we shall restrict our attention to the unitary class-1 principal series representations $\pi=\pi_\lambda, \lambda\in \ag_0^\ast$, realized in the compact picture, i.e.  the carrier space is $H=L^2(K/M)$ and the action of $\pi_\lambda$ on $L^2(K/M)$ is given by (\cite[p. 102]{GV}),
\[(\pi_\lambda(x)f)(k)=e^{-(i\lambda+\rho)H(x^{-1}k)} f(K(x^{-1}k),\ \ f\in L^2(K/M), x\in G, k\in K.\] For almost every $\lambda \in \ag_0^\ast$ with  respect to the Plancherel measure (described below), $\pi_\lambda$ is irreducible.

Let $\what{K}$ be the set of equivalence classes of irreducible unitary representations of $K$. For a representation $(\delta, V_\delta) \in \what{K}$, let
 \[V_\delta^M=\{v\in V_\delta \mid \delta(m)v=v \text{ for all } m\in M\}.\]
 Let $\what{K}_M$ denote the set of representations $(\delta, V_\delta) \in \what{K}$ for which the subspace $V_\delta^M\neq \{0\}$. Let $d(\delta)=\dim V_\delta, \ell(\delta)=\dim V_\delta^M$. We fix an orthonormal basis $v_1, \ldots, v_{d(\delta)}$ of $V_\delta$ such that $\{v_1, \ldots, v_{\ell(\delta)}\}$ span $V_\delta^M$ and consider the following matrix coefficients of $\delta(k), k\in K$:
 \[Y_\delta^{j, i}=\langle v_j, \delta(k)v_i\rangle, 1\le j\le d(\delta), 1\le i\le \ell(\delta).\] Then the set
 \[\{\sqrt{d_\delta}Y^{j,i}_\delta \mid \delta\in \what{K}_M, 1\le j\le d(\delta), 1\le i\le \ell(\delta)\}\] forms a countable orthonormal basis  of $L^2(K/M)$.  Any $K$-finite function in $L^2(K/M)$ is a finite linear combination of such matrix coefficients. We enumerate this orthonormal basis as $e_0, e_1, e_2, \ldots$ where $e_0$ is the only matrix coefficient of the trivial representation $\delta_0$ of $K$, hence is the constant function $1$ on $K$.
 The matrix coefficients of the principal series representations $\pi_\lambda$ with respect to this orthonormal basis, which are relevant for functions on $G/K$ are $\phi^i_\lambda(x)=\langle\pi_\lambda(x) e_0, e_i\rangle$.
In particular, $\phi^0_\lambda(x)=\langle\pi_\lambda(x) e_0, e_0\rangle =\int_K e^{-(i\lambda+\rho)H(x^{-1}k)} dk$ is the elementary spherical function denoted by $\varphi_\lambda$.
The following properties of  $\varphi_\lambda$ and  $\phi^i_\lambda$  will be used
(see \cite{HC-76, GV, Helga-2, Ank1}):

(i) $\varphi_\lambda$ is a $K$-biinvariant function  on $G$,

(ii) $\varphi_\lambda=\varphi_{w\lambda}$ for $\lambda\in \ag_0^\ast$ and $w\in W$,

(iii) $\varphi_\lambda(x^{-1})=\varphi_{-\lambda}(x)= \overline{\varphi_\lambda(x)}$ for $\lambda\in \ag_0^\ast$  and $x\in G/K$,

(iv) $\varphi_\lambda(u^\ast; x)=\overline{\varphi_\lambda(x^{-1}; u)}$ for $\lambda\in \ag_0^\ast$, $u\in U(\g)$  and $x\in G/K$,

(v)  $|\varphi_\lambda(x)|\le \varphi_0(x)\le 1$, for all $\lambda\in \ag_0^\ast$ and $x\in G/K$,

(vi) $ e^{-\rho(H)}\le \varphi_0(\exp H)\le C (1+\|H\|)^c e^{-\rho(H)}, H\in \overline{(\ag_0)_+}$ for some positive constants $C, c$,

(vii) $|\phi^i_\lambda(u_1; x; u_2)|\le C (1+|\lambda|)^{\deg u_1+\deg u_2} \varphi_0(x)$, for $u_1, u_2\in U(\g)$,
$\lambda\in \ag_0^\ast$ and $i=0,1,2,\ldots$,

(viii) $\phi_\lambda^i$ are joint eigenfunctions of $D\in \mathbf{D}(G/K)$ and $\Delta \phi_\lambda^i=-(|\lambda|^2+|\rho|^2) \phi_\lambda^i$, for all $\lambda\in \ag_0^\ast$.

It can be verified that (vi), (vii)  above and \eqref{Haar} imply that  $\phi^i_\lambda, \lambda\in \mathfrak a_0^\ast$ is in $L^{p'}(G/K)$ for any $1\le p<2$. Here and everywhere $p$ and $p'$ are related by $1/p+1/p'=1$.  Property (iv) follows from the definition of $\varphi_\lambda$ and (viii) follows from the corresponding properties of  the function $e_{\lambda, k}$ mentioned above.

For a function $f$ on $G/K$, its spherical Fourier transform $\what{f}$ is defined as
\[\what{f}(\lambda) = \int_{G/K} f(x) \varphi_{-\lambda}(x)dx,\] whenever the integral makes sense.
Thus $\what{f}(\lambda) =\langle f, \varphi_\lambda\rangle_{L^2(G/K)}$. It  is clear that $\what{f}$ is $W$-invariant and $\|\what{f}\|_{L^\infty(\ag_0^\ast, |\hc(\lambda)|^{-2}d\lambda)}\le \|f\|_1$. Here  $\hc(\lambda)$ is the Harish-Chandra $\hc$-function and $|\hc(\lambda)|^{-2} d\lambda$ is the Plancherel measure on $\ag_0^\ast$, where $d\lambda$ is the Lebesgue measure on $\mathfrak a_0^\ast$ identified with $\R^l$. Plancherel measure satisfies the estimate (\cite{Helga-2}),
\begin{equation}
\label{plancherel-estimate}
|\hc(\lambda)|^{-2} \le C (1+|\lambda|)^{\dim \mathfrak n_0}, \text { for } \lambda\in (\ag_0^\ast)_+
\end{equation}
 for some positive constant $C$.
For a function $f \in L^p(G//K), 1\le p \le 2$, satisfying $\what{f}\in L^1(\ag_0^\ast, |\hc(\lambda)|^{-2} d\lambda)$, we have the Fourier inversion formula (\cite[Theorem 3.3]{Stanton-Tomas} \cite[Theorem 5.4]{NPP}:
\begin{equation}\label{inversion}
f(g)=|W|^{-1} \int _{\ag_0^\ast} \what{f}(\lambda) \varphi_{\lambda}(g) |\hc(\lambda)|^{-2} d\lambda,
\end{equation} where $|W|$ is the cardinality of $W$.
 The map $f\mapsto \what{f}$ is an isometry from $L^2(G//K)$ to $L^2(\ag_0^\ast, |\hc(\lambda)|^{-2} d\lambda)^W$, the subspace of $W$-invariant functions in $L^2(\ag_0^\ast, |\hc(\lambda)|^{-2} d\lambda)$.

For a suitable function $f$ on $G/K$, similarly we define,
\begin{equation} \label{FT-i}
\what{f_i}(\lambda) = \int_{G/K} f(x) \phi_{-\lambda}^i(x)dx,
\end{equation} whenever the integral makes sense and we have
\begin{equation}
\label{HY-end-point-1}
\|\what{f_i}\|_{L^\infty(\ag_0^\ast, |\hc(\lambda)|^{-2}d\lambda)}\le C \|f\|_1,
 \end{equation} for some constant $C>0$.
Using the Plancherel theorem (\cite{Helga65, Helga70}) on $G/K$ we have for $i=0, 1,2, \ldots$,
\[\|\what{f_i}\|_{L^2(\ag_0^\ast, |\hc(\lambda)|^{-2} d\lambda)} \le C\|f\|_2,\]
and interpolating this with \eqref{HY-end-point-1} we have the Hausdorff-Young inequality,
\begin{equation}
\label{HY-general}
\|\what{f_i}\|_{L^{p'}(\ag_0^\ast, |\hc(\lambda)|^{-2} d\lambda)} \le C\|f\|_p, 1\le p\le 2, i=0, 1,2, \ldots,
\end{equation} for some constant $C>0$.

\section{Proof of Theorem \ref{main-result}}
We need to use the following extension of Carleman's theorem  on determinacy of measure (see  \cite[Theorem 2.3]{de-Jeu}).
\begin{lemma} \label{Carleman-lem}
Let $\mu$ be a finite Borel measure on $\R^n$ such that for all $m\in \N$ and $1\le j\le n$ the quantities $S_j(m)$ defined by
\[\int_{\R^n} |\lambda_j|^m d\mu(\lambda)\] are finite. If for $j\in \{1,2,\ldots, n\}$, the sequence $\{S_j(2m)\}_{m=1}^\infty$ satisfies the Carleman's condition
\[\sum_{m\in \N} S_j(2m)^{-1/2m}=\infty\] then the set of polynomials is a dense subspace of $L^1(\R^n, d\mu)$.
\end{lemma}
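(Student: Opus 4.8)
The plan is to argue via duality. By the Hahn--Banach theorem the polynomials are dense in $L^1(\R^n,d\mu)$ if and only if the only $g\in L^\infty(\R^n,d\mu)$ annihilating every polynomial, i.e.\ satisfying $\int_{\R^n}\lambda^\alpha g(\lambda)\,d\mu(\lambda)=0$ for all multi-indices $\alpha$, is $g=0$ $\mu$-a.e. (One first checks every polynomial lies in $L^1(d\mu)$: finiteness of all $S_j(m)$ together with the weighted AM--GM inequality forces every mixed moment $\int\prod_j|\lambda_j|^{a_j}\,d\mu$ to be finite.) So I would fix such a $g$, form the finite complex measure $\nu=g\,d\mu$, and consider its Fourier transform $F(t)=\int_{\R^n}e^{i\langle t,\lambda\rangle}g(\lambda)\,d\mu(\lambda)$. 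Differentiating under the integral sign (justified by the finite mixed moments) gives $F\in C^\infty(\R^n)$ with every partial derivative uniformly bounded and $\partial^\alpha F(0)=i^{|\alpha|}\int\lambda^\alpha g\,d\mu=0$. Since the Fourier transform determines a finite measure, it suffices to prove $F\equiv0$.

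I would deduce $F\equiv0$ by induction on the dimension, using the one-variable Denjoy--Carleman theorem in Carleman's form as a black box: \emph{if $\psi\in C^\infty(\R)$ satisfies $\sup_s|\psi^{(r)}(s)|\le T(r)$ for all $r$ with $\sum_r T(2r)^{-1/2r}=\infty$, and $\psi^{(r)}(0)=0$ for all $r\ge0$, then $\psi\equiv0$.} The naive induction ``$F$ vanishes on $\{t_{k+1}=\dots=t_n=0\}$'' fails, because vanishing on a hyperplane gives no control on the transverse derivatives needed to invoke the theorem. The correct, stronger statement $(P_k)$ is: for every $(m_{k+1},\dots,m_n)\in(\N\cup\{0\})^{n-k}$ the function
\[
F_k^{m_{k+1},\dots,m_n}(t_1,\dots,t_k)=\int_{\R^n}\Big(\prod_{j>k}\lambda_j^{m_j}\Big)e^{i(t_1\lambda_1+\dots+t_k\lambda_k)}g(\lambda)\,d\mu(\lambda)
\]
vanishes identically on $\R^k$. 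The base case $(P_0)$ is exactly the assumed vanishing of all mixed moments, and $(P_n)$ with all $m_j=0$ is the desired $F\equiv0$.

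For the step $(P_{k-1})\Rightarrow(P_k)$ I would fix $(m_{k+1},\dots,m_n)$ and $(t_1,\dots,t_{k-1})$, set $\psi(s)=F_k^{m_{k+1},\dots,m_n}(t_1,\dots,t_{k-1},s)$, and note two things. First, $\psi^{(r)}(0)=i^r F_{k-1}^{r,m_{k+1},\dots,m_n}(t_1,\dots,t_{k-1})=0$ by the inductive hypothesis applied to the augmented index $(r,m_{k+1},\dots,m_n)$; this is the crux, as the transverse derivatives at $0$ are precisely the lower-dimensional objects already known to vanish. Second, $\sup_s|\psi^{(r)}(s)|\le\|g\|_\infty\,T(r)$ with $T(r)=\int|\lambda_k|^r\prod_{j>k}|\lambda_j|^{m_j}\,d\mu$, and a Cauchy--Schwarz estimate $T(2r)\le C_0\,S_k(4r)^{1/2}$ reduces the required divergence $\sum_r T(2r)^{-1/2r}=\infty$ to $\sum_r S_k(4r)^{-1/4r}=\infty$. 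After normalizing $\mu$ to a probability measure (harmless for all hypotheses), the moments are log-convex, so $m\mapsto S_k(2m)^{-1/2m}$ is non-increasing and the even-index subseries still diverges by hypothesis (ii). Thus $\psi$ lies in a quasi-analytic class and vanishes to infinite order at $0$, hence $\psi\equiv0$; as $(t_1,\dots,t_{k-1})$ was arbitrary this yields $(P_k)$, completing the induction.

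The main obstacle is conceptual rather than computational: finding the right strengthening $(P_k)$ so that the ``all derivatives vanish at the point'' hypothesis of the one-dimensional Denjoy--Carleman theorem is genuinely available at each stage. The remaining points --- finiteness and log-convexity of the mixed moments, the Cauchy--Schwarz comparison, and stability of the Carleman condition under passing to the weighted measure $\prod_{j>k}|\lambda_j|^{m_j}\,d\mu$ and to the even-index subsequence --- are routine once $\mu$ is normalized, and I would treat them as standard.
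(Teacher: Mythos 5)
The paper does not actually prove this lemma: it is imported wholesale from de Jeu (the citation \cite[Theorem 2.3]{de-Jeu}), so there is no internal proof to compare against, and your proposal is best judged as a self-contained substitute for that citation. As such, it is correct, and it follows the classical route to the extended Carleman theorem: duality ($L^1$--$L^\infty$ via Hahn--Banach), passage to the Fourier transform of $g\,d\mu$, and an induction on the dimension that reduces everything to the one-variable Denjoy--Carleman theorem. The essential idea --- replacing the naive statement ``$F$ vanishes on a coordinate subspace'' by the strengthened family $(P_k)$ involving the weighted transforms $\int\bigl(\prod_{j>k}\lambda_j^{m_j}\bigr)e^{i(t_1\lambda_1+\cdots+t_k\lambda_k)}g\,d\mu$ --- is exactly what makes the induction close, since the transverse derivatives at $0$ of the next variable are then precisely objects covered by the inductive hypothesis. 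The supporting details all check out: mixed moments are finite by weighted AM--GM, so polynomials lie in $L^1(d\mu)$ and differentiation under the integral is legitimate; $T(r)=\int|\lambda_k|^r\prod_{j>k}|\lambda_j|^{m_j}\,d\mu$ is a moment sequence of a positive measure, hence log-convex, so your black-box form of Carleman's theorem (divergence of the even-index subseries suffices, as a subseries of positive terms) applies without any regularization subtleties; and after normalizing $\mu$ to a probability measure, Lyapunov's inequality makes $m\mapsto S_k(2m)^{-1/2m}$ non-increasing, so the hypothesis $\sum_m S_k(2m)^{-1/2m}=\infty$ does force $\sum_r S_k(4r)^{-1/4r}=\infty$, which is what the Cauchy--Schwarz bound $T(2r)\le C_0 S_k(4r)^{1/2}$ requires. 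What your argument buys is self-containedness modulo the classical one-dimensional theorem; what de Jeu's cited theorem buys is more generality (his machinery of quasi-analytic weights yields density in $L^p$ for all $1\le p<\infty$ and determinacy of the measure, not just the $L^1$ statement used here).
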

We also need the following technical result (\cite[Lemma 3.3]{BPR1}).
\begin{lemma} \label{BPR1-3.3}
Let $\{a_n\}$ be a sequence of positive numbers such that the series $\sum_{n\in \N} a_n$ diverges. Then given any $m\in \N$, $\sum_{n\in \N} a_n^{1+\frac mn}=\infty$.
\end{lemma}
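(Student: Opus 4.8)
The plan is to exploit the fact that the exponent $1+\frac{m}{n}$ exceeds $1$ only by the vanishing amount $\frac{m}{n}$, so that $a_n^{1+m/n}=a_n\cdot a_n^{m/n}$ differs from $a_n$ merely by the factor $a_n^{m/n}$. This factor is harmless except when $a_n$ is extremely small. The strategy is therefore to split $\N$ according to the size of $a_n$ and to argue that the divergence of $\sum a_n$ is forced to come from the indices where $a_n$ is not too small, on which passing to the exponent $1+\frac{m}{n}$ costs only a bounded multiplicative factor.

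Concretely, I would first fix $m\in\N$ (the case $m=0$ being trivial) and introduce the threshold $e^{-n/m}$, partitioning $\N$ into the large set $G=\{n:a_n\ge e^{-n/m}\}$ and the small set $B=\{n:a_n<e^{-n/m}\}$. On $G$ one has $\frac{m}{n}\log a_n\ge -1$, hence $a_n^{m/n}\ge e^{-1}$, and therefore $a_n^{1+m/n}\ge e^{-1}a_n$. On $B$ the terms are dominated by a convergent geometric series: $\sum_{n\in B}a_n\le\sum_{n=1}^{\infty}e^{-n/m}=\frac{e^{-1/m}}{1-e^{-1/m}}<\infty$.

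The conclusion then follows by a short bookkeeping argument. Since $\sum_{n\in\N}a_n=\infty$ while $\sum_{n\in B}a_n<\infty$, the large set must carry the divergence, that is $\sum_{n\in G}a_n=\infty$. Combining this with the pointwise lower bound on $G$ gives
\[
\sum_{n\in\N}a_n^{1+m/n}\ge\sum_{n\in G}a_n^{1+m/n}\ge e^{-1}\sum_{n\in G}a_n=\infty,
\]
which is exactly the assertion.

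The only delicate point, and the step I would verify most carefully, is the calibration of the threshold. It must be small enough that the small indices form an absolutely convergent (here geometric) series, so that they cannot be responsible for the divergence of $\sum a_n$; yet it must be large enough that on the complementary large set the factor $a_n^{m/n}=e^{(m/n)\log a_n}$ stays bounded away from $0$ uniformly in $n$. The choice $e^{-n/m}$ achieves both simultaneously, the dependence of the threshold on $m$ being precisely what makes the uniform bound $a_n^{m/n}\ge e^{-1}$ hold. No properties of $\{a_n\}$ beyond positivity and divergence of $\sum a_n$ are used.
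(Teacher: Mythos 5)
Your proof is correct: the cutoff $e^{-n/m}$ does exactly what you say, giving $a_n^{m/n}\ge e^{-1}$ on the large set while the small set contributes at most the convergent geometric series $\sum_n e^{-n/m}$, so the divergence survives the exponent $1+\frac mn$. Note that the present paper does not prove this lemma at all --- it quotes it from \cite{BPR1} --- and your threshold-splitting argument is essentially the standard proof of that cited result (one may equally use the cutoff $e^{-n}$, at the cost of the uniform constant $e^{-m}$ in place of $e^{-1}$), so there is no substantive difference in approach to report.
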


A crucial part of the proof of Theorem \ref{main-result} is taken out as the following lemma.  For a locally integrable function $F$ on $G/K$, we define its $K$-invariant component $F_0(x)=\int_KF(kx) dk$.
\begin{lemma} \label{lem-1} Let $1\le p\le 2$ be fixed. Suppose that for a function  $f\in C^\infty(G/K)$, $d_r(u)f\in L^p(G/K)$ and $(d_r(u)f)_0=0$ for all $u\in U(\mathfrak g)$. Then  $f=0$.
\end{lemma}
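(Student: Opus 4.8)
The plan is to show that the two hypotheses force every coefficient $\what{f_i}(\lambda)$ of \eqref{FT-i} to vanish for almost every $\lambda$, and then to conclude $f=0$ by injectivity of the Fourier transform on $G/K$. Note first that taking $u=1$ in the hypothesis $d_r(u)f\in L^p(G/K)$ gives $f\in L^p(G/K)$, so all the pairings below converge absolutely: by properties (v)--(vii) together with \eqref{Haar}, each $\phi^i_{-\lambda}$ and each of its right-invariant derivatives is dominated by $C(1+|\lambda|)^N\varphi_0\in L^{p'}(G/K)$, while $d_r(u)f\in L^p(G/K)$.

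First I would convert the vanishing of the $K$-averages into a statement about the spherical transform. Since $\varphi_{-\lambda}$ is $K$-biinvariant, integration against it only detects the $K$-invariant component, so for every $u\in U(\g)$,
\[
0=\int_{G/K}(d_r(u)f)_0(x)\,\varphi_{-\lambda}(x)\,dx=\int_{G/K}d_r(u)f(x)\,\varphi_{-\lambda}(x)\,dx .
\]
Because $d_r$ is built from left translations, which preserve the $G$-invariant measure, the bilinear form $\int_{G/K}g\cdot h\,dx$ turns $d_r(u)$ into $d_r(u^t)$, where $u\mapsto u^t$ is the anti-automorphism of $U(\g)$ determined by $X^t=-X$ for $X\in\g_0$; the domination bounds above justify the integration by parts, yielding
\[
0=\int_{G/K}f(x)\,d_r(u^t)\varphi_{-\lambda}(x)\,dx .
\]

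Next I would expand $d_r(u^t)\varphi_{-\lambda}=d_r(u^t)\phi^0_{-\lambda}$ in the $\phi^i_{-\lambda}$. Writing $\phi^i_\mu(g)=\langle\pi_\mu(g)e_0,e_i\rangle$ and using $\pi_\mu(\exp t_1X_1\cdots\exp t_rX_r\,g)=\pi_\mu(\exp t_1X_1)\cdots\pi_\mu(g)$, one gets $d_r(v)\phi^0_\mu(g)=\langle\pi_\mu(g)e_0,\pi_\mu(v^*)e_0\rangle$ for $v\in U(\g)$. Since $e_0$ is $K$-finite and $\pi_\mu(U(\g))H_F\subseteq H_F$, the vector $w:=\pi_{-\lambda}((u^t)^*)e_0$ lies in $H_F$, hence is a finite combination $w=\sum_i c_i e_i$; therefore $d_r(u^t)\phi^0_{-\lambda}=\sum_i\overline{c_i}\,\phi^i_{-\lambda}$, and the last display becomes $\sum_i\overline{c_i}\,\what{f_i}(\lambda)=0$. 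The main point is now representation-theoretic: as $u$ runs through $U(\g)$ so does $(u^t)^*$, so $w$ runs through the cyclic subspace $\pi_{-\lambda}(U(\g))e_0$. For almost every $\lambda$ the representation $\pi_{-\lambda}$ is irreducible, hence $(\pi_{-\lambda},H_F)$ is an irreducible $\g_0$-module; as $e_0\neq 0$, this cyclic subspace is all of $H_F$, which contains every basis vector $e_j$. Choosing $u$ with $w=e_j$ forces $c_i=\delta_{ij}$ and thus $\what{f_j}(\lambda)=0$, for every $j$ and almost every $\lambda$. Since $\{e_i\}$ is an orthonormal basis of $L^2(K/M)$, the vanishing of all $\what{f_i}(\lambda)$ is exactly the vanishing of the Helgason--Fourier transform of $f\in L^p(G/K)$, whence $f=0$ by injectivity of that transform.

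I expect the two sensitive points to be the rigorous integration by parts on the noncompact space $G/K$ and the termwise manipulation of the expansion; both are controlled precisely by the uniform estimates (v)--(vii) and \eqref{Haar}, which place every $d_r(u^t)\phi^i_{-\lambda}$ in $L^{p'}(G/K)$. The conceptual heart, by contrast, is the cyclicity of $e_0$ in the irreducible $\g_0$-module $H_F$ for generic $\lambda$: this is what upgrades a single relation, one for each $u$, into the simultaneous vanishing of all the coefficients $\what{f_i}(\lambda)$.
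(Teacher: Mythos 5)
Your proposal is correct and follows essentially the same route as the paper: pair the vanishing averages $(d_r(u)f)_0$ against $\varphi_{-\lambda}$, transfer the derivative onto the spherical function, recognize the result as the matrix coefficient $\langle \pi_{-\lambda}(x)e_0,\pi_{-\lambda}(\cdot)e_0\rangle$, and invoke irreducibility of $\pi_{-\lambda}$ for almost every $\lambda$ together with cyclicity of $e_0$ in $H_F$ to kill every component $\what{f_i}(\lambda)$, hence $f=0$ by injectivity of the Fourier transform. The only differences are cosmetic: you use the transpose $u\mapsto u^t$ where the paper uses the adjoint $u\mapsto u^\ast$ (immaterial, since both are bijections of $U(\g)$ and the hypothesis ranges over all $u$), and you phrase the conclusion directly as $\what{f_i}(\lambda)=0$ rather than as vanishing of the vector $\int_G f(x)\pi_{-\lambda}(x)e_0\,dx$, which is the same statement.
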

\begin{proof}
We fix a $\lambda\in \mathfrak a_0^\ast$ such that the representation $(\pi_{-\lambda}, L^2(K/M))$ is irreducible.
We note that, due to the $K$-invariance of $\varphi_{-\lambda}$,
\begin{align} \label{decom-0}
&\int_G (d_r(u)f)_0(x)\, \varphi_{-\lambda}(x)\, dx\\ \nonumber
=&\int_G d_r(u)f(x)\, \varphi_{-\lambda}(x) \,dx \\ \nonumber
=&\int_G f(x) \, \, d_r(u^\ast)\varphi_{-\lambda}(x)\, dx
\end{align}
Since $\pi_{-\lambda}$ is unitary, we further have,
\begin{equation}\label{decom-1}
d_r(u^\ast)\varphi_{-\lambda}(x)=\langle \pi_{-\lambda}(u^\ast) \pi_{-\lambda}(x)e_0, e_0\rangle=\langle \pi_{-\lambda}(x)e_0, \pi_{-\lambda}(u) e_0\rangle. \end{equation}
Putting \eqref{decom-1}  in \eqref{decom-0}   we have
\begin{align} \label{step-1-2}
\int_G (d_r(u)f)_0(x)\, \varphi_{-\lambda}(x)\, dx = & \int_G f(x) \langle \pi_{-\lambda}(x)e_0, \pi_{-\lambda}(u) e_0\rangle dx
\end{align}
The hypothesis $(d_r(u)f)_0=0$ thus implies that
\begin{align}\label{step-1-3}\int_G f(x) \langle \pi_{-\lambda}(x)e_0, \pi_{-\lambda}(u) e_0\rangle dx = & \langle \int_G f(x) \, \pi_{-\lambda}(x) e_0\,  dx, \pi_{-\lambda}(u) e_0\rangle=0.
\end{align}
  Since $(\pi_{-\lambda}, L^2(K/M))$ is unitary and irreducible, it  induces an irreducible representation of $\mathfrak g_0$ on $L^2(K/M)_F$
    (see Section 2), hence  $e_0$ is a cyclic vector for the representation $(\pi_{-\lambda}, L^2(K/M)_F)$ of $\g_0$. Since \eqref{step-1-3} is true for all $u\in U(\mathfrak g)$, we have $\langle \int_G f(x)\pi_{-\lambda}(x)e_0\, dx, e_i\rangle=0$ for $i=0,1,2,\ldots$, hence  $\int_G  f(x)\pi_{-\lambda}(x)e_0\, dx =0$.
 As $f$ right-$K$-invariant, we also have, $\int_G f(x)\pi_{-\lambda}(x)e_i\, dx =0$  for $i=1,2,\ldots$.
  This is true for almost every $\lambda\in \mathfrak a_0^\ast$, since for almost every $\lambda\in \ag_0^\ast$, $\pi_\lambda$ is irreducible.  Hence  $f=0$.
\end{proof}

We recall that (see Section 2) for $u\in U(\g)$ and $x\in G$, $\varphi_\lambda(u^\ast; x)= (d_r(u^\ast)\varphi_\lambda)(x)=\overline{\varphi_\lambda(x^{-1};u)}$.
We have the following formula (\cite[Proposition 4.6.2.]{GV}) which by the observation above and in our  parametrization can be written as:
\begin{align} \label{derivative-phi-1}
(d_r(u^\ast)\varphi_\lambda)(x) = &\sum_{s\in \Lambda} \alpha_{s}(\lambda-\rho) \int_K  e^{-(i\lambda+\rho) H(x^{-1}k)} F_s(k) dk \\
= &\sum_{s \in \Lambda} \alpha_{s}(\lambda-\rho)\langle \pi_\lambda(x) e_0, \bar F_s\rangle,\,\, x\in G\nonumber
\end{align}
where $\Lambda$ is a finite set, $F_s\in C^\infty(K/M)$, $\alpha_{s}\in U(\mathfrak a)$, $(\lambda-\rho)\in \mathfrak a_0^\ast$ and thus $\alpha_{r}(\lambda-\rho)$ is a polynomial in $\lambda$ (see Section 2).
Since the left hand side of the equation above is $(d_r(u^\ast)\varphi_\lambda)(x)=\langle \pi_\lambda(x) e_0, \pi_\lambda(u) e_0 \rangle$, and as $\pi_\lambda(u) e_0\in H_F$,  it follows that  $F_s, \, s\in \Lambda$ are $K$-finite functions. Expanding  each $F_s$ as a finite linear combination
of the orthonormal basis elements $e_i, i=0,1,2,\ldots$ of $L^2(K/M)$, rearranging the polynomials and calling them $P_i$,  we arrive at
\begin{equation}\label{derivative-phi}
(d_r(u^\ast)\varphi_\lambda)(x)=\sum_{i\in \Lambda} P_i(\lambda)\, \varphi_\lambda^i(x).
\end{equation}

We are now ready to complete the proof of Theorem \ref{main-result}.
\begin{proof}[Proof of Theorem \ref{main-result}] We divide the proof in a few steps.
\vspace{.2in}

\noindent{\em Step 0:}
For $g\in G$, let $\ell_g:x\mapsto gx$ denote the left translation on $G/K$. Thus $\ell_gf(x)=f(gx)$.
Since $\Ad(g):\mathfrak g_0\to \mathfrak g_0$ is a linear map
and $\exp \Ad(g) X = g (\exp X) g^{-1}, g\in G, X\in \g_0$, we have,
\[f(\exp (t \Ad(g)X) x_0)=f(\exp(\Ad(g) tX) x_0)=f(g\exp(tX) g^{-1}x_0).\]
Hence,
\[(d_r(\Ad(g) X)f)(x_0)= \frac{\partial}{\partial t} f(g\exp (tX) g^{-1}x_0)|_{t=0} = (d_r(X)(\ell_gf)) (g^{-1}x_0)\] and
extending to elements  of $U(\g)$,  for $u\in U(\g)$ and $g\in G$,

\begin{equation}
\label{adjoint-rep}
(d_r(\Ad(g) u)f)(x_0)= (d_r(u)(\ell_gf)) (g^{-1}x_0). \end{equation}

The condition $d_r(u)f(x_0)=0$ for all $u\in U(\g)$ implies that for any fixed $g\in G$,  $d_r(\mathrm{Ad}(g)u)f)(x_0)=0$ for all $u\in U(\g)$. Since $G$ acts transitively on $G/K$, we take $g\in G$ such that $g^{-1}x_0=o$, (i.e. $x_0=gK$). Then for this fixed $g$,
by \eqref{adjoint-rep} $d_r(u)(\ell_gf)(o)=0$.
Thus the function $\ell_gf$ satisfies the vanishing condition (iii) at origin. It is clear that $\ell_gf$  satisfies conditions (ii) of the hypothesis, because $\Delta$ commutes with left translations and the measure $dx$ on $G/K$ is left $G$-invariant. We also have for all $u\in U(\g)$,
\[d_r(u)\ell_g f(x)=\ell_g (d_r(\mathrm{Ad}(g)) u f)(x),\]  and by the hypothesis $d_r(\mathrm{Ad}(g)) u f)\in L^p(G/K)$. Hence  $d_r(u)(\ell_g f)\in L^p(G/K)$.
That is $\ell_g f$ satisfies condition (i). Thus we can substitute $f$ by $\ell_g f$ and without loss of generality we can and shall assume that $x_0=o$. Thus the modified condition (iii) is $d_r(u)f(o)=0$ for all $u\in U(\g)$.

\vspace{.2in}

\noindent{\em Step 1:} We recall that for a function $F\in C^\infty(G/K)$,  $F_0(x)=\int_KF(kx) dk$.  It is clear that $F_0(o)=F(o)$.
If $v\in U(\g)^K$, then $(d_r(v)F)_0= d_r(v) F_0$ (see Section 2).
Therefore,  condition (iii) in the hypothesis (modified in Step 0)  implies that
\[d_r(v)(d_r(u)f)_0(o)= (d_r(v) d_r(u)f)_0(o)=0  \text{ for all } v\in U(\g)^K  \text{ and } u\in U(\mathfrak g).\]
\vspace{.2in}

\noindent{\em Step 2:} We fix a $u \in U(\mathfrak g)$. Let $h=(d_r(u)f)_0$. Since  for $v\in U(\g)^K$, $d_r(v)h=d_l(v)h=D(v)h$ (see Section 2), by Step 1, $Dh(o)=0$ for all $D\in \mathbf D(G/K)$. It is also clear that $Dh\in L^p(G//K)$ for all $D\in \mathbf D(G/K)$.

As in \eqref{decom-0}, the $K$-invariance of $\varphi_\lambda$,  $\lambda\in \mathfrak a_0^\ast$ yields,
\begin{align*}
\what{h}(\lambda) &=\int_G (d_r(u)f)_0(x)\  \varphi_{-\lambda}(x) \, dx
=\int_G f(x)\ \ (d_r(u^\ast) \varphi_{-\lambda})(x)\, dx.
\end{align*}

From this and  \eqref{derivative-phi} we have,
\begin{equation}
\label{step-2}
\what{h}(\lambda)=\int_G f(x)\ \ \sum_{i\in \Lambda}\  P_i(\lambda) \, \varphi_{-\lambda}^i(x) dx
=\sum_{i\in \Lambda} P_i(\lambda)\what{f}_i(\lambda),
\end{equation}
where $\Lambda$ is a finite set, $P_i$ are polynomials and $\what{f}_i(\lambda)$ are
  as defined in \eqref{FT-i}.

Since $\Delta^m f\in L^p(G/K)$ for all $m\in \N\cup\{0\}$, by \eqref{HY-general} we have  \[(|\lambda|^2+|\rho|^2)^m\what{f}_i\in L^{p'}(\mathfrak a_0^\ast, |\hc(\lambda)|^{-2} d\lambda).\]  Hence for any polynomial $P(\lambda)$, $P(\lambda) \what{f}_i\in L^{p'}(\mathfrak a_0^\ast, |\hc(\lambda)|^{-2} d\lambda)$. This shows that $\what{h}\in L^{p'}(\mathfrak a_0^\ast, |\hc(\lambda)|^{-2} d\lambda)$.
\vspace{.2in}

\noindent{\em Step 3:} Using the function $h$ in Step 2, define a Borel Measure $\mu$ on $\mathfrak a_0^\ast$ by
\[\mu(E)=\int_E|\what{h}(\lambda)| |\hc(\lambda)|^{-2} d\lambda,\] for Borel subsets $E$ of $\mathfrak a_0^\ast$.
Then for any $m\in \N\cup\{0\}$ and $\alpha$ a large positive integer,
\begin{align*} &\int_{\mathfrak a_0^\ast} (|\lambda|^2+\rho|^2)^m \  |\what{h}(\lambda)|\ |\hc(\lambda)|^{-2} d\lambda\\
=&\int_{\mathfrak a_0^\ast} (|\lambda|^2+|\rho|^2)^{m+\alpha}\ \ |\what{h}(\lambda)|\ \ \frac 1{(|\lambda|^2+\rho|^2)^{\alpha}}\ \  |\hc(\lambda)|^{-2} d\lambda\\
\le&\sum_{i\in \Lambda} \int_{\mathfrak a_0^\ast} (|\lambda|^2+|\rho|^2)^{m+\alpha}\ \ |\what{f}_i(\lambda)|\ \ \frac {|P_i(\lambda)|}{(|\lambda|^2+\rho|^2)^{\alpha}}\ \ |\hc(\lambda)|^{-2} d\lambda\\
\le &\sum_{i\in \Lambda} \left( \int_{\mathfrak a_0^\ast} (|\lambda|^2+|\rho|^2)^{(m+\alpha)p'} |\what{f}_i(\lambda)|^{p'} |\hc(\lambda)|^{-2} d\lambda \right)^{1/p'} C_{\alpha, p,i}\\
\le &\sum_{i\in \Lambda} C_{\alpha,p, i} \|\Delta^{m+\alpha} f\|_p\\
\le & C_{\alpha,p} \|\Delta^{m+\alpha} f\|_p.
\end{align*}
Here,  by \eqref{plancherel-estimate},
\[C_{\alpha, p, i} =  \left( \int_{\mathfrak a_0^\ast}  \frac {|P_i(\lambda)|^p}{(|\lambda|^2+\rho|^2)^{p\alpha}}\ \ |\hc(\lambda)|^{-2} d\lambda \right)^{1/p}<\infty\] for $\alpha$ suitably large and $C_{\alpha, p}=\sum_{i\in \Lambda} C_{\alpha, p, i}$.
In the inequality above, we have used \eqref{step-2} in the second step, H\"older's inequality in the third step and Hausdorff-Young inequality \eqref{HY-general} in the fourth step.
Above we have assumed $p\in (1, 2]$, but for $p=1$ it can be appropriately modified to yield the same result. Thus we have established that for any $m\in \N\cup\{0\}$ and $\alpha$ a large positive integer,
\begin{equation}\label{eq-1}
\int_{\mathfrak a_0^\ast} (|\lambda|^2+\rho|^2)^m \  |\what{h}(\lambda)|\ |\hc(\lambda)|^{-2} d\lambda\le C_{\alpha,p} \|\Delta^{m+\alpha} f\|_p, \ \ p\in [1,2].
\end{equation}
In particular $\what{h}\in L^1(\mathfrak a_0^\ast, |\hc(\lambda)|^{-2} d\lambda)$ and hence $\mu$ is a finite measure. Since
 $\what{h}$ is also in $L^{p'}(\mathfrak a_0^\ast, |\hc(\lambda)|^{-2} d\lambda)$ (see Step 2), we have $\what{h}\in L^2(\mathfrak a_0^\ast, |\hc(\lambda)|^{-2} d\lambda)$ and hence,
\[\int_{\mathfrak a_0^\ast}|\what{h}(\lambda)|d\mu(\lambda)=\int_{\mathfrak a_0^\ast}|\what{h}(\lambda)|^2 |\hc(\lambda)|^{-2} d\lambda <\infty.\] That is, $\what{h}\in L^1(\mathfrak a_0^\ast, d\mu)$ and due to its $W$-invariance $\what{h}\in L^1(\mathfrak a_0^\ast, d\mu)^W$.

 Since for any $D\in \mathbf{D}(G/K)$, $D\varphi_\lambda=\Gamma(D)(i\lambda) \varphi_\lambda$ and $Dh$ is $K$-biinvariant as $h$ is so, we have
\[\what{Dh}(\lambda)= P(\lambda) \what{h}(\lambda)\] for some   $W$-invariant polynomial $P(\lambda)$.
Taking $m$ appropriately large in the inequality \eqref{eq-1} we see that
$\what{Dh}\in L^1(\mathfrak a_0^\ast, |\hc(\lambda)|^{-2} d\lambda)$ for all $D\in \mathbf{D}(G/K)$. Hence, by the inversion formula \eqref{inversion} we get:
 \[Dh(o)=|W|^{-1}\int_{\mathfrak a_0^\ast} P(\lambda)\, \what{h}(\lambda)\, |\hc(\lambda)|^{-2} d\lambda.\]
We have noted at the beginning of Step 2 that $Dh(o)=0$ for any $D\in \mathbf{D}(G/K)$. Thus
\begin{equation} \label{where-0}
\int_{\mathfrak a_0^\ast} P(\lambda)\, \what{h}(\lambda)\, |\hc(\lambda)|^{-2} d\lambda = 0,
\end{equation} for any $W$-invariant polynomial $P$.
\vspace{.2in}

\noindent{\em Step 4:} The function $h$ is as  in Step 2 and Step 3. We fix an orthonormal basis  of $\mathfrak a_0^\ast$ and write $\lambda\in \mathfrak a_0^\ast$ as $\lambda=(\lambda_1, \lambda_2, \ldots, \lambda_l)$ using this basis. For $j=1,2,\ldots, l$ we define the sequence of moments $S_j(m)$ as
\[S_j(m)=\int_{\mathfrak a_0^\ast} |\lambda_j|^m d\mu(\lambda).\]
Then by definition of $\mu$ and  \eqref{eq-1}, for suitably large $\alpha\in \N$,
\begin{align*} S_j(2m) \le&\int_{\mathfrak a_0^\ast}(|\lambda|^2+|\rho|^2)^m\, |\what{h}(\lambda)|\, |\hc(\lambda)|^{-2} d\lambda\\
\le& C_{p, \alpha} \|\Delta^{m+\alpha} f\|_p,
\end{align*} for some constant $C_{p,\alpha}$.
From condition (ii) in the hypothesis and Lemma \ref{BPR1-3.3}, we get
\[\sum_{m=1}^\infty S_j(2m)^{-1/2m}=\infty.\]
Therefore by Lemma \ref{Carleman-lem}, given $\epsilon>0$, there exists a $W$-invariant polynomial $P_\epsilon$ such that $\|\bar{\what{h}}-P_\epsilon\|_{L^1(\mathfrak a_0^\ast, d\mu)}<\epsilon$.

Hence,
\begin{align*}
&\int_{\mathfrak a_0^\ast} |\what{h}(\lambda)|^2 |\hc(\lambda)|^{-2} d\lambda\\
=& \int_{\mathfrak a_0^\ast} (\overline{\what{h}(\lambda)} - P_\epsilon(\lambda) + P_\epsilon(\lambda))\,\, \what{h}(\lambda)\,\, |\hc(\lambda)|^{-2} d\lambda\\
\le& \int_{\mathfrak a_0^\ast} |(\overline{\what{h}(\lambda)} - P_\epsilon(\lambda)|\, \, d\mu(\lambda)+ \left|\int_{\mathfrak a_0^\ast} P_\epsilon(\lambda) \what{h}(\lambda) |\hc(\lambda)|^{-2} d\lambda\right|\\
<& \epsilon,
\end{align*}
as the second integral in the last but one line is $0$ by \eqref{where-0}. This proves $\what{h}=0$ and hence $h=0$ as an $L^p$-function.

Since $h=(d_r(u)f)_0$ is defined in Step 2 using arbitrary $u\in U(\g)$, in view of Lemma \ref{lem-1}, $f=0$ and the proof is completed.
\end{proof}


\begin{remark} \label{reduce-to-biinvariant}
Theorem \ref{BPR2}   is essentially accommodated in Theorem \ref{main-result}.
To keep the argument simple let us first take $x_0$ to be the origin. We assume that  $f$ in Theorem \ref{main-result} is $K$-biinvariant. Take  $u\in U(\g)^K\subset U(\g)$. Then $d_r(u)f=d_l(u)f=D(u)f$. In general, for $u\in U(\g)$,  $\tilde{u}=\int_K Ad(k) u\, dk \in U(\g)^K$ and it follows from the definition that \[d_r(u)f(o)= d_r(\tilde{u})f(o)= d_l(\tilde{u})f(o)= D(\tilde{u})f(o),\] using $K$-biinvariance of $f$.  Thus the hypothesis of Theorem \ref{main-result} reduces to that of Theorem \ref{BPR2} when $f$ is assumed to be $K$-biinvariant.

If $x_0=gK$ in Theorem \ref{BPR2} is not the origin,  then note that the $K$-biinvariant function $(\ell_g f)_0$ satisfies its  hypothesis with $x_0$ replaced by the origin, hence by the first part of the argument $(\ell_gf)_0=0$. To complete this discussion we only need to note that, since $f$ is $K$-biinvariant, $(\ell_gf)_0 =0$ implies that $f=0$. Precisely, $(\ell_g f)_0=0$ implies that its spherical Fourier transform, which is $\what{f}(\lambda\varphi_\lambda(g)$ for almost every  $\lambda\in \ag_0^\ast$, is zero. Hence $\what{f}(\lambda)=0$, for almost every $\lambda\in \ag_0^\ast$, since $\lambda\mapsto \varphi_\lambda(g)$ is a real analytic function on $\ag_0^\ast$.

\end{remark}
\section{Closing Comments}
1. A comparison between  Theorem \ref{main-result} and Theorem \ref{chernoff} might be worth consideration. To put Theorem \ref{chernoff} in perspective, we view  $\R^n$  as the symmetric space $M(n)/\mathrm{SO}(n)$, where $M(n)=\R^n\rtimes  \mathrm{SO}(n)$ is the group of isometries of $\R^n$. The condition (i) and (ii) in the hypothesis in Theorem \ref{main-result} are evidently similar to the first two conditions in Theorem \ref{chernoff}. But we may need to elaborate on condition (iii).  While Theorem \ref{chernoff} uses all partial derivatives of $\R^n$ for the vanishing condition,  we use all differential operators of $G$ which preserves the right $K$-invariance of the function and thereby are precisely the set of differential operators relevant for functions on $G/K$. A specific question could be that if the differential operators induced by an element of $\mathfrak{so}(n)$ is used in Theorem \ref{chernoff}. Indeed, a straightforward computation shows that for any $A=(A_{ij})\in \mathfrak{so}(n)$ and $f\in C^\infty(\R^n)$,
\[f(A;x)=\frac d{dt}f(\exp tAx)|_{t=0}=\sum_{i=1}^n\sum_{j=1}^n A_{ij}\, x_j\, \frac{\partial f}{\partial x_j}(x).\]  Thus the vanishing condition in Theorem \ref{chernoff} (which uses all partial derivatives of $\R^n$) implies that for any differential operator $D$ of $M(n)$, $Df(x_0)=0$. This establishes the analogy.
\vspace{.2in}

2. If in Theorem \ref{chernoff}, we consider the vanishing condition only for the $M(n)$-invariant differential operators, instead of all partial derivatives, the condition reduces to $\Delta_{\R^n}^mf(x_0)=0$ for any $m\in \N \cup \{0\}$ where $\Delta_{\R^n}$ is the usual Euclidean Laplacian. The conclusion will then be that the radial average of $f$ around the point $x_0$ is zero. Analogously,  if our vanishing condition involves only  the left-invariant differential operators, i.e. for all $D\in \mathbf{D}(G/K)$, $Df(x_0)=0$, then we can conclude that $(\ell_{g}f)_0=0$, where $x_0=gK$, $f_0(x)=\int_Kf(kx) dk$ and $\ell_{g}f(x)=f(gx)$. The precise statement is given below, which  is Theorem \ref{BPR2} without the restriction of $K$-biinvariance on the function $f$.
\begin{theorem}\label{writeup-result-2} Fix a $p\in [1,2]$.
Let $f\in C^\infty(G/K)$ be such that $\Delta^m f\in L^p(G/K)$ for all integers $m\ge 0$ and
\[\sum_{m=0}^\infty \|\Delta^m f\|_p^{-1/2m}=\infty.\]
If $Df(x_0)=0$ at some point $x_0=gK\in G/K$ for all $D\in \mathbf{D}(G/K)$, then $(\ell_{g}f)_0=0$.
\end{theorem}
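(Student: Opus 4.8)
The plan is to reduce to the origin and then to recognize $(\ell_g f)_0$ as a single $K$-biinvariant function to which the spherical argument used in Theorem \ref{main-result} (equivalently, Theorem \ref{BPR2}) applies verbatim. First I would replace $f$ by $F=\ell_g f$. Since every $D\in\mathbf D(G/K)$ is left $G$-invariant, it commutes with $\ell_g$, so the hypothesis $Df(x_0)=0$ becomes $DF(o)=\ell_g(Df)(o)=Df(gK)=0$ for all $D\in\mathbf D(G/K)$. Because $dx$ is $G$-invariant, left translation is an $L^p$-isometry, whence $\Delta^mF=\ell_g\Delta^mf\in L^p(G/K)$ and $\|\Delta^mF\|_p=\|\Delta^mf\|_p$; thus $F$ satisfies the integrability and Carleman-type conditions, and the quantity to be shown to vanish, $(\ell_g f)_0$, is exactly $F_0$. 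So it suffices to prove $F_0=0$.

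Next I would set $h=F_0$ and verify that it meets the hypotheses of the $K$-biinvariant case. By construction $h$ is smooth and $K$-biinvariant. Since $\Delta\in\mathbf D(G/K)$ commutes with the averaging $F\mapsto F_0$, one has $\Delta^m h=(\Delta^m F)_0$, and by Jensen's inequality together with the left $K$-invariance of $dx$ one gets $\|\Delta^m h\|_p=\|(\Delta^m F)_0\|_p\le\|\Delta^m F\|_p=\|\Delta^m f\|_p$. Hence $\|\Delta^m h\|_p^{-1/2m}\ge\|\Delta^m f\|_p^{-1/2m}$, so the Carleman-type condition transfers from $f$ to $h$. Finally, using $(DF)_0=D(F_0)=Dh$ and the fact that $o$ is fixed by $K$, I obtain $Dh(o)=(DF)_0(o)=DF(o)=0$ for every $D\in\mathbf D(G/K)$.

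At this stage $h$ is precisely the sort of function treated in Theorem \ref{BPR2}, so $h=0$, i.e. $(\ell_g f)_0=0$, as asserted. To keep the note self-contained I would simply rerun Steps 2--4 of the proof of Theorem \ref{main-result} with $u=1$: here $\what h$ is the ordinary spherical transform $\what F$, the moment bound $\int_{\ag_0^\ast}(|\lambda|^2+|\rho|^2)^m|\what h(\lambda)|\,|\hc(\lambda)|^{-2}\,d\lambda\le C_\alpha\|\Delta^{m+\alpha}F\|_p$ follows from the Hausdorff--Young inequality \eqref{HY-general} and the Plancherel estimate \eqref{plancherel-estimate}, the $\alpha$-shift in the Carleman condition is absorbed by Lemma \ref{BPR1-3.3}, and Lemma \ref{Carleman-lem} yields density of $W$-invariant polynomials in $L^1(\ag_0^\ast,d\mu)$ for the finite measure $d\mu=|\what h|\,|\hc|^{-2}\,d\lambda$; combined with $\int_{\ag_0^\ast}P(\lambda)\what h(\lambda)\,|\hc(\lambda)|^{-2}\,d\lambda=0$, which comes from $Dh(o)=0$ via the inversion formula \eqref{inversion}, this forces $\|\what h\|_{L^2(\ag_0^\ast,|\hc|^{-2}d\lambda)}=0$ and hence $h=0$. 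I do not anticipate a genuine obstacle, since the analytic core is identical to that of Theorem \ref{main-result}. The only conceptual point to flag is that, lacking the full family of vanishing conditions $d_r(u)f(x_0)=0$, Lemma \ref{lem-1} is not available, so one cannot upgrade $h=0$ to $f=0$; the vanishing of the single $K$-biinvariant average $(\ell_g f)_0$ is exactly the strongest conclusion the weaker hypothesis permits.
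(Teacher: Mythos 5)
Your argument is correct and is essentially the paper's own proof: translate by $g$ to move the vanishing condition to the origin, pass to the $K$-biinvariant average $h=(\ell_g f)_0$, transfer the $L^p$, Carleman-type, and vanishing hypotheses to $h$ (the paper cites Minkowski's inequality where you use Jensen, an immaterial difference since $dk$ is a probability measure), and conclude $h=0$ from Theorem \ref{BPR2}. Your optional self-contained rerun of Steps 2--4 with $u=1$ is just an inlining of that same spherical-transform argument, and your closing remark on why $f=0$ cannot be concluded matches the paper's discussion of the counterexample from \cite{BPR1}.
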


Here is  a sketch of the proof. Since $D\in \mathbf{D}(G/K)$ commutes with translations $f$ can be replaced by $h=\ell_{g}f$, so that the vanishing condition in the hypothesis is at the origin $o$, instead of at $x_0$. That is $Dh(o)=0$ for all $D\in \mathbf{D}(G/K)$. But $Dh(o)=0$ implies that $D h_0(o)=0$.
Other two conditions are also satisfied by  $h_0$, because by Minkowski's inequality, $\|h_0\|_p\le \|f\|_p$ and $\Delta$ commutes with translations. Note that $h_0$ is $K$-biinvariant.
Thus by Theorem \ref{BPR2},   $h_0=(\ell_{g}f)_0=0$. \qed

A counter example is given in \cite{BPR1} to show that the hypothesis of this theorem does not imply that $f=0$.
Crux is that for  any function $f$ on $G/K$ whose
$K$-biinvariant component is $0$, $Df(o)=0$ for all $D\in \mathbf{D}(G/K)$. In other words, for a function $f \in C^\infty(G/K)$ and for any $D\in \mathbf{D}(G/K)$, the condition $Df(o)=0$  is equivalent to $D f_0(o)=0$, and therefore  such a vanishing condition imposes nothing on the function $f-f_0$.
\vspace{.2in}

\noindent{\bf Acknowledgements:}  I would like to thank Swagato K Ray  for introducing me to this problem. I am grateful to him, to Kingshook Biswas and Jyoti Sengupta for many illuminating discussions.



\begin{thebibliography}{999}
\bibitem{Ank1} Anker, J-P. {\em La forme exacte de l'estimation fondamentale de Harish-Chandra.}
C. R. Acad. Sci. Paris S\'er. I Math. 305 (1987), no. 9, 371--374.
\bibitem{BPR1} Bhowmik, M.; Pusti, S.; Ray, S. K. {\em Theorems of Ingham and Chernoff on Riemannian symmetric spaces of noncompact type}.
J. Funct. Anal. 279 (2020), no. 11, 108760.
\bibitem{BPR2} Bhowmik, M.; Pusti, S.; Ray, S. K. {\em A theorem of Chernoff on quasi-analytic functions for Riemannian symmetric spaces}. IMRN, to appear.
\bibitem{B-T-1939} Bochner, S.: Taylor, A. E.{\em Some theorems on quasi-analyticity for functions of several variables}. Amer J. Math. 61 (1939) 303--329
\bibitem{Boch1950}  Bochner, S. {\em Quasi-analytic functions, Laplace operator, positive kernels}. Ann. of Math. (2) 
\bibitem{Carleman} Carleman, T. {\em les fonctions quasi-analitiques} Gauthier Villars, Paris, 1926, p 20
\bibitem{Cher} Chernoff, P. R. {\em Quasi-analytic vectors and quasi-analytic functions}. Bull. Amer. Math. 81 (1975), 637--646.
\bibitem{Cohen} Cohen, P. J. {\em  A simple proof of the Denjoy-Carleman theorem}.
Amer. Math. Monthly 75 (1968), 26--31

\bibitem{de-Jeu} de Jeu, M. {\em Determinate multidimensional measures, the extended Carleman theorem and quasi-analytic weights}.
Ann. Probab. 31 (2003), no. 3, 1205--1227
\bibitem{Den-21} Denjoy, A. {\em Sur les fonctions quasi-analitiques de variable reelle}. C. R. Acad. Sci. Paris 173, 1329--1331.
\bibitem{GV} Gangolli, R.; Varadarajan, V. S. {\em Harmonic analysis of spherical functions on real reductive groups}.
Ergebnisse der Mathematik und ihrer Grenzgebiete, 101. Springer-Verlag, Berlin, 1988
\bibitem{Gango} Gangolli, R. Representations of Lie groups {\em Notes of lectures given at Indian Statistical Institute}, Bangalore, February 1988.
\bibitem{GMT} Ganguly, P.; Manna, R.; Thangavelu, S. {\em On a theorem of Chernoff on rank one Riemannian symmetric spaces}.
J. Funct. Anal. 282 (2022), no. 5, Paper No. 109351,

\bibitem{HC-76} Harish-Chandra {\em Harmonic analysis on real reductive groups. II. Wavepackets in the Schwartz space}. Invent. Math. 36 (1976), 1--55.
\bibitem{Helga65} Helgason, S. {\em Radon-Fourier transforms on symmetric spaces and related group representations}.
Bull. Amer. Math. Soc. 71 (1965), 757--763.
\bibitem{Helga70} Helgason, S.
{\em A duality for symmetric spaces with applications to group representations}.
Advances in Math. 5 (1970), 1--154 (1970).

\bibitem{Helga73} Helgason, S.
{\em The surjectivity of invariant differential operators on symmetric spaces. I}.
Ann. of Math. (2) 98 (1973), 451--479.

\bibitem{Helga-2} Helgason, S. {\em Groups and geometric analysis. Integral geometry, invariant differential operators, and spherical functions}. Academic Press, Inc., Orlando, FL, 1984
\bibitem{Helga-3} Helgason, S. {\em Geometric Analysis on Symmetric spaces} Math. Surveys and Monographs, 39. American Mathematical Society, Providence, RI, 1994
\bibitem{KP}
Krantz, S. G.; Parks, H. R. {\em A primer of real analytic functions}. 2nd edition.  Birkh\"auser Boston, Inc., Boston, MA, 2002
\bibitem{NPP} Narayanan, E. K.; Pasquale, A.; Pusti, S. {\em Asymptotics of Harish-Chandra expansions, bounded hypergeometric functions associated with root systems, and applications}.
Adv. Math. 252 (2014), 227--259

\bibitem{Nelson} Nelson, E. {\em Analytic vectors}.
Ann. of Math. (2) 70 (1959), 272--615.
\bibitem{Nuss1965} Nussbaum, A. E. {\em Quasi-analytic vectors}. Ark Mat. 6 (1965), 179-191
\bibitem{Nuss1969} Nussbaum, A. E. {A note on quasi-analytic vectors}. Studia Math. 33 (1969), 305-309.



\bibitem{Rudin}
Rudin, W. {\em Real and Complex Analysis}, third edition, McGraw-Hill Book Co., New York, 1987,
\bibitem{Stanton-Tomas}  Stanton, R. J.; Tomas, P. A. {\em Pointwise inversion of the spherical transform
on $L^p(G/K), 1\le p<2$}. Proc. Amer. Math Soc. 73 (1979), no 3, 398--404

\bibitem{Wallach} Wallach, Nolan R. {\em Representations of semi-simple Lie groups and Lie algebras. Lie theories and
 their applications} Proc. Ann. Sem. Canad. Math. Congr., Queen's Univ., Kingston, Ont., 1977, pp. 154-246, Academic Press, New York, 1978.
\end{thebibliography}
\end{document}